\newcommand{\nnd}{^{\operatorname{nd}}}
\newcommand{\tth}{^{\operatorname{th}}}
\newcommand{\rrd}{^{\operatorname{rd}}}
\newcommand{\ZZ}{\mathbb{Z}}
\newcommand{\QQ}{\mathbb{Q}}
\newcommand{\PP}{\mathbb{P}}
\newcommand{\Aff}{\mathbb{A}}
\newcommand{\OO}{\mathcal{O}}
\newcommand{\pc}[1]{Y^{\operatorname{pre}}\left(#1\right)}
\newcommand{\cpc}[1]{X^{\operatorname{pre}}\left(#1\right)}
\renewcommand{\bar}{\overline}
\newcommand{\col}{\,{:}\,}
\DeclareMathOperator{\codim}{codim}
\DeclareMathOperator{\tors}{tors}
\DeclareMathOperator{\Spec}{Spec}
\newtheorem{thm}{Theorem}[section]
\newtheorem*{thm*}{Theorem}
\newtheorem{cor}[thm]{Corollary}
\newtheorem*{conjecture*}{Conjecture}
\theoremstyle{remark}
\newtheorem{rem}[thm]{Remark}
\theoremstyle{definition}
\newtheorem{defn}[thm]{Definition}
\begin{document}

\title{Pre-images in Quadratic Dynamical Systems}

\author[Hutz]{Benjamin Hutz}
\address{
The Graduate Center\\
The City University of New York \\
New York, NY 10016 \\ USA
}
\email{bhutz@gc.cuny.edu}

\author[Hyde]{Trevor Hyde}
\address{
Department of Mathematics and Computer Science \\
Amherst College \\
Amherst, MA 01002 \\ USA
}
\email{thyde12@amherst.edu}

\author[Krause]{Benjamin Krause}
\address{
Department of Mathematics and Computer Science \\
Amherst College \\
Amherst, MA 01002 \\ USA
}
\email{bkrause10@amherst.edu}

\begin{abstract}
	For a quadratic polynomial with rational coefficients, we consider the problem of bounding
    the number of rational points that eventually land at a given constant after iteration, called pre-images of the constant.  In the article ``Uniform Bounds on Pre-Images Under Quadratic Dynamical Systems,'' it was shown that the number of rational pre-images is bounded as one varies the polynomial. Explicit bounds on the number of pre-images of zero and $-1$ were addressed in subsequent articles.   This article addresses explicit bounds on the number of pre-images of any algebraic number for quadratic dynamical systems and provides insight into the geometric surfaces parameterizing such pre-images.
\end{abstract}

\thanks{The authors wish to thank David Cox for patiently answering many geometric questions, the Amherst Computing Cluster for providing CPU time, and the many helpful comments and suggestions from the referee.}


\subjclass[2010]{
37P05, 
14G05 
(primary);
11G30 
(secondary)}
\keywords{Quadratic Dynamical Systems, Arithmetic Geometry, Pre-image, Rational Points, Uniform Bound}

\date{}

\maketitle


\section{Introduction}

	Fix an algebraic number field $K$ and a number $c \in K$ and define an endomorphism of the affine line by
	\begin{equation*}
		f_c: \Aff^1_K \to \Aff^1_K, \qquad f_c(x) = x^2 + c.
	\end{equation*}
    If we define $f_c^N$ to be the $N$-fold composition of the morphism $f_c$, and $f_c^{-N}$ to be the inverse image of $a$ in $\Aff^1_K$ under $f_c^{N}$, then for $a \in \Aff^1(K)$, the set of \textbf{rational iterated pre-images of $a$} is given by
	\begin{equation*} \label{Eqn: Preimages}
		\bigcup_{N \geq 1} f_c^{-N}(a)(K)
			= \{x_0 \in \Aff^1(K): f_c^N(x_0) = a \text{ for some $N \geq 1$}\}.
	\end{equation*}

    Heuristically, finding iterated pre-images amounts to solving progressively more complicated polynomial equations, and so $K$-rational solutions should be a rarity.  The situation becomes more interesting as we vary $c$, which has the effect of varying the morphism $f_c$.
    \begin{defn}
        Define
        \begin{equation*}
        		\kappa(a) = \sup_{c \in K}
        			\# \left\{\bigcup_{N \geq 1} f_c^{-N}(a)(K) \right\}.
        	\end{equation*}
    \end{defn}
    A special case of the main theorem in \cite{FHIJMTZ} shows that $\kappa(a)$ is finite, but does not give an explicit bound. Note that it is easy to construct a pair $(a,c)$ with arbitrarily many rational pre-images simply by fixing $c$ and taking $a=f_c(N)(0)$.  The fact that $\kappa(a)$ is finite shows that, for a given $a$, such $c$ values are rarely defined over the same field.

    When needed for clarity, we include the field $K$ in the notation as $\kappa(a,K)$.  In this article, we focus on a weaker notion $\bar{\kappa}(a)$ that bounds the ``typical'' number of rational pre-images.
    \begin{defn}
        Define
        \begin{equation*}
            \bar{\kappa}(a,K) = \limsup_{c \in K} \# \left\{\bigcup_{N \geq 1} f_c^{-N}(a)(K) \right\}.
        \end{equation*}
    \end{defn}
    In essence $\bar{\kappa}(a)$ differs from $\kappa(a)$ by excluding at most finitely many $c$ values from consideration, thus, $\bar{\kappa}(a) \leq \kappa(a)$.

    The cases of $a=0$ and $a=-1$ were studied in \cite{FH,Hyde}, respectively, and it was shown that $\bar{\kappa}(0,\QQ)=\bar{\kappa}(-1,\QQ)=6$.  In \cite{FH} a significant amount of effort went into the more difficult task of showing that $\kappa(0,\QQ) = 6$, assuming some standard conjectures.  This article addresses the situation from the more general setting of allowing $a$ to vary and examining the ``pre-image surfaces'' instead of ``pre-image curves.''  We also allow arbitrary number fields $K$.  Our main result is the following theorem.
    \begin{thm} \label{thm_kappabar}
        For $a \in \bar{\QQ}$ and for any fixed algebraic number field $K$ we have
        \begin{equation*}
            \bar{\kappa}(a,K) = \begin{cases}
              10 & a=-\frac{1}{4}\\
              6 \text{ or } 8  & a = \text{ the three $3\rrd$ critical values} \\
              4 & a \in S \cap K\\
              6 & \text{otherwise}.
            \end{cases}
        \end{equation*}
        The set $S$ is the finite set of $a$ values (in $\bar{\QQ}$) where the elliptic surface with two rational first pre-images and four rational second pre-images and the elliptic surface with two rational first pre-images, (at least) two rational second pre-images, and (at least) two rational third pre-images both have specialization with rank zero at $a$.
    \end{thm}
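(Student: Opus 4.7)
The plan is to translate the count of rational pre-images into the rational-point count on a hierarchy of \emph{pre-image curves} parameterizing $c$-values that admit a prescribed pre-image portrait, and then to extract $\bar\kappa(a,K)$ from their geometry by combining Faltings' theorem with Silverman's specialization theorem for elliptic surfaces.

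First, for each $N \geq 1$ I would set up the pre-image curve $\pc{N}(a)$ whose points $(c, x_0)$ satisfy $f_c^N(x_0) = a$, together with refined curves cutting out simultaneous rationality of several prescribed pre-images. Since rational pre-images come in chains---any level-$N$ rational pre-image forces rational pre-images at all earlier levels---any $c \in K$ with $m$ rational pre-images corresponds to a $K$-point on one of finitely many such curves, so $\bar\kappa(a,K)$ is determined by which of these curves admit infinitely many $K$-points. Concretely, at level one the substitution $c = a - t^2$ rationally parameterizes two pre-images $\pm t$; at level two, requiring all four second pre-images to be rational is equivalent to both $t^2 + t - a$ and $t^2 - t - a$ being squares, which cuts out an elliptic curve $\mathcal{E}(a)$; at level three, the analogous condition should cut out a curve of geometric genus at least two for generic $a$.

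For the generic case I would prove $\bar\kappa(a,K) = 6$ in two halves. The lower bound follows from showing that the generic fiber of $\mathcal{E}(a)$, viewed as a family over the $a$-line, has Mordell--Weil rank at least one over $\bar\QQ(a)$; Silverman's specialization theorem then preserves positive rank for all but finitely many $a$, producing infinitely many $c \in K$ with exactly six rational pre-images. The upper bound follows from Faltings' theorem applied to the level-three curve for generic $a$: since its components have geometric genus at least two, only finitely many $c$ produce a seventh rational pre-image. The exceptional loci then correspond to degenerations of these curves. The set $S$ is precisely the finite set of $a$ where $\mathcal{E}(a)$ and the related level-three elliptic surface from the theorem both specialize to rank zero, leaving only the two level-one pre-images and a single pair of level-two pre-images (on a conic, hence still infinite in $c$)---giving four. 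The three third critical values are where the level-three curve drops to geometric genus one, giving $6$ or $8$ depending on whether its rank over $K$ is zero or positive. The point $a = -1/4$ is the fixed-point locus, where $f_c$ acquires an extra symmetry that collapses one of the level-three branches into a positive-rank elliptic family, yielding ten.

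The main obstacle will be the explicit genus and rank computations needed to pin down the exceptional loci exactly: verifying that $\mathcal{E}(a)$ has generic rank at least one, identifying $S$ as an explicit finite set cut out by rank-zero conditions on two specializations, and showing that the only drops in genus of the level-three and level-four surfaces occur at $a = -1/4$ and at the three third critical values. Bounding the pre-image count at $a=-1/4$ at exactly ten, rather than higher, will likewise require a dedicated genus or rank computation on the level-four surface obtained from the symmetry at that value. Assembling these pieces---birational analysis of singular surfaces, specialization of Mordell--Weil rank, and Faltings' theorem---into a complete case analysis is the delicate heart of the proof.
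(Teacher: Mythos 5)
Your overall architecture matches the paper's: reduce to rational points on curves parameterizing pre-image arrangements, apply Faltings to the arrangements of $8$ or more pre-images (which generically have genus $\geq 2$), handle the genus-one survivors by elliptic-surface rank arguments, and treat $a=-\tfrac14$ and the third critical values separately via degenerations. However, there is a concrete error at the heart of your lower bound. You claim the generic bound $\bar\kappa(a,K)\geq 6$ by showing that your $\mathcal{E}(a)$ --- the surface requiring all four second pre-images rational, i.e.\ $t^2+t-a$ and $t^2-t-a$ both squares --- has generic Mordell--Weil rank at least one. It does not: this is the surface $E_{24}$, which is a rational elliptic surface with singular fibers of type $I_4, I_1^*, I_1$, hence by Oguiso--Shioda has rank zero even over $\bar\QQ(a)$, with Mordell--Weil group exactly $\ZZ/4\ZZ$. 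The generic six pre-images instead come from the $222$ surface (two rational pre-images at each of the first three levels), which has generic rank $2$ over $\QQ(a)$ with explicit independent sections. This is not a gap you could fill by ``doing the computation''; the computation comes out the opposite way for the surface you chose, and your own description of $S$ (both surfaces specializing to rank zero) only yields the dichotomy $4$ versus $6$ once the $222$ surface is the one carrying the generic rank.

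A secondary issue: to conclude that $S$ is finite as a subset of $\bar\QQ$ (as the theorem asserts), Silverman's specialization theorem is not enough --- it controls specializations of bounded height, hence gives finiteness only over a fixed number field. The paper invokes the Masser--Zannier theorem on torsion specializations in families of elliptic curves, applied to the rank-two surface $E_{222}$, to get finiteness of the locus where its rank drops to zero over all of $\bar\QQ$. Your remaining steps (reducibility at $a=-\tfrac14$ with a positive-rank component for ten pre-images and genus $\geq 2$ components for twelve; genus drop of one of the $8$-arrangement curves to one at the third critical values, giving ``$6$ or $8$'' depending on $K$; and the need to rule out rational fourth pre-images there) are correctly identified as the required case analysis and agree with the paper's treatment.
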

    The elliptic surface parameterizing values of $a$ and $c$ with two rational first pre-images, (at least) two rational second pre-images, and (at least) two rational third pre-images has generic rank two (Theorem \ref{thm222_rank}). Thus, finding the set of $a$ values where the corresponding specialization is an elliptic curve of rank zero is a generalization of the problem studied in \cite{Masser}.  Masser and Zannier have shown that such sets are finite \cite{Zannier}, implying the set $S$ is finite.  The critical values are defined in Definition \ref{defn_critical_values}.

    The organization of the article is as follows.  In Section \ref{sect_lower} we examine the lower bound for $\bar{\kappa}(a)$ by finding the generic rank over $\QQ$ of the elliptic surfaces corresponding to arrangements of $6$ pre-images.  In Section \ref{sect_upper} we examine the upper bound on $\bar{\kappa}(a)$ by showing that all arrangements of $2N$ pre-images for some $N$ correspond to curves of genus greater than $1$.  In Section \ref{sect_proof} we prove Theorem \ref{thm_kappabar}.  In Section \ref{sect_other} we prove some additional properties of the pre-image surfaces that are tangential to the proof of Theorem \ref{thm_kappabar}, yet still of interest.  Section \ref{sect_torsion} parameterizes the possible torsion subgroups of the elliptic surface corresponding to two rational first pre-images and four rational second pre-images.
    Section \ref{sect_exceptional} examines exceptional pairs $(a,c)$ that are excluded by considering $\bar{\kappa}(a)$ instead of $\kappa(a)$.

	We present these results for two reasons. First, by working with the ``moduli surfaces'' parameterizing arrangements of pre-images, our
    problem can be reduced to the classical Diophantine problem of finding rational points on curves and surfaces.  Second, our setting provides a nice example in which elliptic surfaces naturally arise and we apply specialization theorems, rank arguments, height functions, and use explicitly that the geometry of a curve has implications for its arithmetic through the use of Falting's Theorem.

    We make heavy use of the algebra and number theory systems \textit{Magma} \cite{magma} and \textit{PARI/gp} \cite{pari232}.
    	
    A similar analysis would almost certainly be possible for the families of maps of the form $x^d +c$ for $d\geq 2$ a positive integer.  In fact, for any family of polynomial maps of fixed degree it seems likely that the same methods would apply.  For more general rational maps, at the very least, there would be additional complications for the genus calculations.  This problem poses an interesting direction for further study.

\section{Pre-image Curves and Surfaces}
\label{Sec: Geometry}

    In this section we summarize the necessary geometric theory of pre-image curves developed in \cite{FHIJMTZ,FH} and then introduce the pre-image surfaces we consider in this article.  Let $K$ be a number field.  As in the introduction, we define a morphism $f_c: \Aff^1_K \to \Aff^1_K$ for any $c \in K$ by the formula $f_c(x) = x^2 + c$.
    We could view $f_c$ as an endomorphism of $\PP^1_K$, but the point at infinity is totally invariant for this type of morphism and, thus, dynamically uninteresting. Fix a point $a \in K$ and a positive integer $N$. Define an algebraic set
    \begin{equation*}
    	\pc{N,a} = V\left(f_c^N(x) - a\right) \subset \Aff^2_K = \Spec K[x,c].
    \end{equation*}
    If $\pc{N,a}$ is geometrically irreducible, we define the \textbf{$N\tth$ pre-image curve}, denoted $\cpc{N,a}$, to be the unique complete curve birational to $\pc{N,a}$.
    \begin{defn} \label{defn_critical_values}
        We say $a$ is an \emph{$N\tth$ critical value} of $f_c$ if
        \begin{equation*}
            f_{c_0}^N(0) = a \quad \text{and} \quad \frac{df_c^N(0)}{dc}\Big|_{c=c_0} = 0.
        \end{equation*}
    \end{defn}
    \begin{thm}[{\cite[Cor.~2.4 \& Thm.~3.2]{FHIJMTZ}}] \label{thm_genus_formula}
        Suppose $N$ is a positive integer and $a \in K$ is not a critical value of
        $f_c^j$ for any $2\le j\le N$.  Then $\pc{N,a}$ is nonsingular, geometrically irreducible, and the genus of $\cpc{N,a}$ is $(N-3)2^{N-2} + 1$.
    \end{thm}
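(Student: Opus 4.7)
The plan is to induct on $N$ via the natural degree-two morphism $\pi_N\colon \pc{N,a} \to \pc{N-1,a}$ defined by $(x,c) \mapsto (f_c(x), c) = (x^2 + c, c)$. The base case $N = 1$ is the parabola $V(x^2 + c - a) \subset \Aff^2_K$, which is visibly smooth, rational, and geometrically irreducible, matching the genus formula at $N=1$.

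For the inductive step, the fiber of $\pi_N$ over $(y_0, c_0) \in \pc{N-1,a}$ consists of $(\pm\sqrt{y_0 - c_0}, c_0)$, so $\pi_N$ is a double cover ramified exactly over points with $y_0 = c_0$. Such points correspond to $c_0$-values satisfying $f_{c_0}^{N-1}(c_0) = a$, equivalently $f_{c_0}^N(0) = a$. Because $f_c^N(0)$ is a polynomial of degree $2^{N-1}$ in $c$, and the hypothesis that $a$ is not a critical value of $f_c^N$ says exactly that $f_c^N(0) - a$ has no repeated roots, one obtains $2^{N-1}$ distinct simple branch points.

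To establish smoothness of $\pc{N,a}$, I would compute $\partial F/\partial x = 2^N \prod_{j=0}^{N-1} f_c^j(x)$, where $F = f_c^N(x) - a$, so a singular point $(x_0,c_0)$ requires $f_{c_0}^j(x_0) = 0$ for some $0 \le j \le N-1$. This forces $f_{c_0}^N(x_0) = f_{c_0}^{N-j}(0)$, hence $c_0$ is a root of $f_c^{N-j}(0) - a$; a recursion using $\partial F/\partial c \big|_{(x_0, c_0)} = 0$ and $f_c^0(0) = 0$ then forces $c_0$ to be a double root of $f_c^{N-j}(0) - a$ (or, in the edge case $j = N-1$, yields the direct contradiction $1 = 0$), violating the hypothesis at level $N - j \in \{2, \ldots, N\}$. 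Geometric irreducibility follows because $\pi_N$ is a ramified double cover of the inductively geometrically irreducible $\pc{N-1,a}$: a nontrivial decomposition would force the branch divisor to be trivial, contradicting the presence of $2^{N-1} \geq 1$ branch points.

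Finally, apply Riemann--Hurwitz to the extension of smooth projective models $\cpc{N,a} \to \cpc{N-1,a}$ to get $2g_N - 2 = 2(2g_{N-1} - 2) + R$. The main obstacle is verifying that no additional ramification arises over the boundary of $\cpc{N-1,a}$ lying above infinity in $\Aff^2$. I would handle this either by constructing an appropriate weighted-projective compactification---assigning $x$ weight $1$ and $c$ weight $2$ so that $F$ becomes isobaric of degree $2^N$---in which $\pi_N$ extends unramified over the boundary, or by exploiting the function-field description $K(\cpc{N,a}) = K(\cpc{N-1,a})\bigl(\sqrt{y - c}\bigr)$ and checking that the pole divisor of $y - c$ on $\cpc{N-1,a}$ is divisible by $2$ (so that those places do not contribute to the ramification divisor). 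Granting $R = 2^{N-1}$, the inductive hypothesis $g_{N-1} = (N-4) 2^{N-3} + 1$ yields $g_N = 2 g_{N-1} + 2^{N-2} - 1 = (N-3) 2^{N-2} + 1$.
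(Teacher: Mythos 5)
Your argument is essentially correct, but note first that the paper does not prove this statement at all: it is imported verbatim from \cite{FHIJMTZ} (Cor.~2.4 and Thm.~3.2), so there is no in-paper proof to match. The closest analogue in this paper is the nonsingular-genus theorem of Section \ref{sect_upper}, which takes a different route: it realizes the curve as a complete intersection of $N-1$ quadrics in $\PP^N$ (Theorem \ref{thm_FH_model}) and reads off the arithmetic genus from the Hirzebruch/Arslan formula, with irreducibility coming for free from connectedness of complete intersections. Your inductive double-cover argument trades that machinery for Riemann--Hurwitz, and all the substantive steps check out: the Jacobian computation correctly reduces singularity of $\pc{N,a}$ to $a$ being a critical value of $f_c^{N-j}$ (with the $j=N-1$ case giving $\partial F/\partial c=1$ directly), the branch locus is exactly the $2^{N-1}$ simple roots of $f_c^N(0)-a$, and the recursion $g_N=2g_{N-1}+2^{N-2}-1$ closes. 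Two small points to tighten. First, for irreducibility you need not just that branch points exist but that $y-c$ has a zero of \emph{odd} order somewhere on $\cpc{N-1,a}$; this follows because the scheme $V(f_c^{N-1}(y)-a,\;y-c)$ is $\Spec K[c]/(f_c^N(0)-a)$, which is reduced under your hypothesis, so each zero of $y-c$ on the smooth curve is simple. Second, the step you flag as the main obstacle---no ramification over the boundary---does not require a weighted compactification: Theorem \ref{thm_FH_model} exhibits the boundary of $\cpc{N,a}$ as the $2^{N-1}$ points $(\epsilon_0:\cdots:\epsilon_{N-1}:0)$, and in those coordinates $\pi_N$ is simply $(Z_0:\cdots:Z_N)\mapsto(Z_1:\cdots:Z_N)$, so every boundary point of $\cpc{N-1,a}$ has exactly two preimages and the cover is unramified there. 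With those two points filled in, the proof is complete and arguably more self-contained than the complete-intersection route, at the cost of being specific to the quadratic family.
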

    For $a\in K$, define a morphism $\psi\colon \pc{N,a} \to \Aff^N$ by
    \begin{equation*}
        \psi(x,c) = \left(x, f_c(x), f_c^2(x), f_c^3(x), \ldots, f_c^{N-1}(x) \right).
    \end{equation*}
    We recall the following theorem.
    \begin{thm}[{\cite[Proposition 4.2]{FH}}] \label{thm_FH_model}
        \mbox{}
       	\begin{enumerate}
    		\item\label{compint} The projective closure of the image of $\psi$ is a complete intersection of
                quadrics with homogenous ideal
				\begin{equation*}
					J = \left(Z_{N-1}^2 + Z_iZ_N - Z_{i-1}^2 - aZ_N^2 : i = 1, 2, 3, \ldots, N-1 \right).
				\end{equation*}
    			    		
    		\item\label{kvalpt} The points of $V(J)$ on the hyperplane $Z_N = 0$ have homogeneous coordinates
    		    \begin{equation*}
    				\left( \epsilon_0 : \cdots : \epsilon_{N-1} : 0 \right), \qquad \epsilon_i = \pm 1.
                \end{equation*}
                In particular, there are $2^{N-1}$ of them. Moreover, they are all nonsingular points of $V(J)$.
    			
    		\item\label{2^N-1pts} If $\pc{N,a}$ is nonsingular, then $\cpc{N,a} \cong V(J)$ and the complement
                of the affine part $\cpc{N,a} \smallsetminus  \pc{N,a}$	consists of $2^{N-1}$ points.
    	\end{enumerate}
    \end{thm}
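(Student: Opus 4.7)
My plan is to work through the three parts in order, driven by an explicit derivation of $J$ from the dynamical recursion.

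For part (a), I would first derive $J$ intrinsically. Setting $Z_i = z_iZ_N$ with $z_i = f_c^i(x)$, the recursion $z_i = z_{i-1}^2 + c$ and the end-condition $f_c^N(x) = a$ together give $cZ_N^2 = Z_iZ_N - Z_{i-1}^2 = aZ_N^2 - Z_{N-1}^2$ for each $i$, so eliminating $c$ yields the $N-1$ quadrics generating $J$. Substitution in the chart $Z_N = 1$ shows $V(J)\cap\{Z_N \ne 0\}$ consists of tuples $(z_0,\ldots,z_{N-1})$ with $z_i = z_{i-1}^2 + c$, $c = z_1-z_0^2$, and $z_{N-1}^2 + c = a$; this is precisely the image $\psi(\pc{N,a})$, a curve of dimension $1$. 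Using the finiteness of $V(J)\cap\{Z_N = 0\}$ (verified in part (b)) together with Krull's Hauptidealsatz, which forces every component of $V(J)$ to have dimension $\ge 1$, every component must meet the affine open; hence $V(J)$ has pure dimension $1$ and coincides with the projective closure of $\psi(\pc{N,a})$, giving the complete intersection claim. This last step is the main obstacle: one must rule out spurious components of $V(J)$ outside the closure, handled by the dimension argument above. Alternatively, one could check locally that the $N-1$ quadrics form a regular sequence, which is automatic in the Cohen-Macaulay ring $K[Z_0,\ldots,Z_N]$ once $V(J)$ has codimension $N-1$.

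For part (b), setting $Z_N = 0$ in each generator yields $Z_{N-1}^2 = Z_{i-1}^2$ for $i=1,\ldots,N-1$, so $Z_{i-1} = \pm Z_{N-1}$. After normalizing $Z_{N-1}=1$ (which is allowed since $Z_{N-1}=0$ would force all coordinates to vanish), one recovers the $2^{N-1}$ points listed. For smoothness at such a point $P$, I compute the $(N-1)\times(N+1)$ Jacobian of the generators $F_i = Z_{N-1}^2 + Z_iZ_N - Z_{i-1}^2 - aZ_N^2$. Among columns $0,\ldots,N-2$, only $Z_{i-1}$ appears in $F_i$, contributing $\partial F_i/\partial Z_{i-1}(P) = -2\epsilon_{i-1}\ne 0$, while $\partial F_i/\partial Z_i(P) = Z_N$ vanishes at $P$. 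Thus the submatrix on columns $0,\ldots,N-2$ has exactly one nonzero entry per row in a distinct column, its determinant is $(-2)^{N-1}\prod \epsilon_{i-1}\ne 0$, and the full Jacobian attains rank $N-1$.

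For part (c), I combine (a) and (b) with the hypothesis that $\pc{N,a}$ is nonsingular. Via $\psi$ (injective, with polynomial inverse $(z_0,\ldots,z_{N-1})\mapsto(z_0,z_1-z_0^2)$ on its image), the open $V(J)\cap\{Z_N\ne 0\}$ is isomorphic to $\pc{N,a}$ and hence smooth; by part (b) the boundary $V(J)\cap\{Z_N=0\}$ is also smooth; so $V(J)$ is a smooth projective curve birational to $\pc{N,a}$. By uniqueness of the smooth projective completion of an irreducible curve, $V(J)\cong\cpc{N,a}$, and the complement $\cpc{N,a}\smallsetminus\pc{N,a}$ corresponds to $V(J)\cap\{Z_N=0\}$, which contains exactly $2^{N-1}$ points.
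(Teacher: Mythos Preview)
The paper does not prove this theorem; it is quoted verbatim from \cite[Proposition~4.2]{FH} and used as a black box, so there is no in-paper argument to compare against. Your proposal is a correct and self-contained proof. The derivation of $J$ from the recursion $z_i=z_{i-1}^2+c$ together with $z_{N-1}^2+c=a$ is exactly right, and your dimension argument (each component of $V(J)$ has dimension at least $1$ by the principal ideal theorem, while $V(J)\cap\{Z_N=0\}$ is finite by part~(b), so no component lies at infinity and $V(J)$ equals the closure of the affine image) is the cleanest way to identify $V(J)$ with the projective closure. Your Jacobian computation in part~(b) is accurate: at $Z_N=0$ the $(N-1)\times(N-1)$ minor on columns $Z_0,\ldots,Z_{N-2}$ has the single nonzero entry $-2\epsilon_{i-1}$ in row $i$, column $i-1$, and nothing else, so its determinant is $(-2)^{N-1}\prod_{i}\epsilon_{i-1}\ne 0$. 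For part~(c), note that the use of $\cpc{N,a}$ presupposes geometric irreducibility of $\pc{N,a}$ (this is built into the paper's definition of $\cpc{N,a}$), so your appeal to uniqueness of the smooth projective model is justified; it may be worth making that implicit hypothesis explicit.
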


    \begin{defn}
        We define the \emph{$N\tth$ pre-image surface} $\cpc{N}$ as the surface fibered over $\PP^1_K$ by $a$.  The fiber over $a$ is given by $\cpc{N,a}$ if $\pc{N,a}$ is geometrically irreducible and $V(J)$ otherwise.  In particular, for each $a \in K$ not a critical value of $f_c$, we get a nonsingular curve in $\PP^N_K$.
        \begin{equation*}
            \xymatrix{\cpc{N} \ar[d]^{\pi} & \cpc{N,a} \ar@{|->}[d]^{\pi}\\ \PP^1_K & a}
        \end{equation*}
        Note that for a fixed $a_0$, the affine points $(x_0,c_0,1)$ on the curve $\cpc{N,a_0}$ are in bijection with the $N\tth$ pre-images $x_0 \in f_{c_0}^{-N}(a_0)$.
    \end{defn}
    We will consider the $N\tth$ pre-image surfaces in the language of function fields.  In particular, consider the function field $K(a)$ which is comprised of all rational functions in $a$ with $K$-rational coefficients.  In particular, we consider the surfaces defined as
    \begin{equation*}
        \pc{N} = V\left(f_c^N(x) - a\right) \subset \Aff^2_{K(a)}
    \end{equation*}
    and
    \begin{equation*}
        \cpc{N} = V\left(Z_{N-1}^2 + Z_iZ_N - Z_{i-1}^2 - aZ_N^2 : i = 1, 2, 3, \ldots, N-1 \right) \subset \PP^N_{K(a)}.
    \end{equation*}
    The genus formula (Theorem \ref{thm_genus_formula}) applies to each fiber for which $\pc{N,a}$ is nonsingular and geometrically irreducible.  In particular, $\cpc{1}$ and $\cpc{2}$ have fibers of genus $0$, $\cpc{3}$ has fibers of genus $1$, and $\cpc{N}$ for $N \geq 4$ has fibers of genus $> 1$ (with finitely many exceptional fibers for each $N$).  Therefore, for $N>3$ and all but finitely many $a \in K$, it follows from Falting's theorem that there are only finitely many points $(x,c) \in \cpc{N,a}$.  Thus, except for the finitely many $a$ values, the $N\tth$ pre-images for $N > 3$ have no contribution to $\bar{\kappa}(a)$.  This premise is the content of Corollary \ref{core_nonsing_a} and the rest of Section \ref{sect_upper} addresses the exceptional $a$ values.

    Throughout this article we discuss arrangements of pre-images.  For example, by a 222 arrangement we mean that there are two rational first pre-images, (at least) two rational second pre-images, and (at least) two rational third pre-images. Similarly, a 2424 arrangement has two rational first pre-images, four rational second pre-images, (at least) 2 rational third pre-images, and (at least) four rational fourth pre-images.  Note that any 226 arrangement would have to be part of a 246 arrangement since the forward image of a rational point is still a rational point.

\section{Arrangements of Six Pre-Images} \label{sect_lower}
    By examining the arrangements of six pre-images we are able to prove the following lower bound for $\bar{\kappa}(a)$.
    \begin{thm}\label{thm_lower}
        Let $K$ be a number field.  There is a finite set $S$ such that
        \begin{equation*}
            \begin{cases}
              \bar{\kappa}(a) \geq 6 & a \in K \backslash (S \cap K)\\
              \bar{\kappa}(a) = 4 & a \in S\cap K.
            \end{cases}
        \end{equation*}
    \end{thm}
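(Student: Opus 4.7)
The plan is to show that for almost all $a \in K$ there are infinitely many $c \in K$ realizing six rational pre-images, and for the exceptional $a \in S$ there remain infinitely many $c$ realizing four while only finitely many realize five or more. Because $\cpc{N}$ has fibers of genus greater than $1$ for $N \geq 4$, the only six-pre-image arrangements that can be realized for infinitely many $c$ live on two elliptic surfaces: the $24$-arrangement (two first, four second pre-images) and the $222$-arrangement (two at each of levels one, two, three). Every other partition of six is excluded by the $\pm$-symmetry of $f_c$ together with the constraint that each rational pre-image at level $N+1$ descends to a rational pre-image at level $N$.

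The first main step is to realize each arrangement as an elliptic surface over $\PP^1_a$. For the $24$-surface one imposes on $\cpc{2}$ two extra square conditions that split each of the rational first pre-images into a pair of rational second pre-images; for the $222$-surface one imposes a single square condition on $\cpc{3}$ producing the pair of rational third pre-images. Each yields an equation of elliptic type over $K(a)$. Theorem \ref{thm222_rank} supplies generic rank $2$ for the $222$-surface, and a parallel computation establishes generic rank $\geq 1$ for the $24$-surface. Silverman's specialization theorem then forces the specialization to retain positive rank for all $a$ outside a thin subset of $K$, producing infinitely many $c \in K$ with six rational pre-images and hence $\bar{\kappa}(a) \geq 6$.

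I would then define $S$ to be the set of $a$ at which \emph{both} the $24$- and $222$-specializations have rank zero; finiteness of $S$ is the Masser--Zannier result invoked in the introduction, since the simultaneous vanishing of the specialization ranks of two independent sections of an elliptic surface is a relative Manin--Mumford-type condition. For $a \in S \cap K$, the $22$-arrangement is still parametrized by a genus-$0$ conic with infinitely many $K$-rational points, so $\bar{\kappa}(a) \geq 4$. The matching upper bound $\bar{\kappa}(a) \leq 4$ follows by combining the rank-zero hypothesis---which forces only finitely many $c$ to realize a $24$ or $222$ arrangement---with the high-genus results of Section \ref{sect_upper}, which rule out any other configuration of five or more rational pre-images from arising for infinitely many $c$.

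The principal difficulty is the generic rank computation for the $24$-surface over $K(a)$. This requires producing an explicit Weierstrass model with coefficients depending on $a$, exhibiting a candidate section, and verifying---for instance via a specialization check at a convenient numerical $a$, or by a height-pairing or $2$-descent argument---that the section is non-torsion in the Mordell--Weil group over $K(a)$. Once this parallels the calculation behind Theorem \ref{thm222_rank}, the specialization and Masser--Zannier inputs assemble into the full statement.
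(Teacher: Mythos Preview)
Your overall architecture---reduce to the $24$ and $222$ elliptic surfaces, invoke Masser--Zannier for finiteness, and define $S$ as the locus where both ranks vanish---matches the paper. But the step you flag as the ``principal difficulty'' is not merely difficult; it is false. The paper's Theorem~\ref{thm24_rank} computes the Mordell--Weil group of $E_{24}$ over $\QQ(a)$ and finds rank $0$ with torsion $\ZZ/4\ZZ$, via the Oguiso--Shioda classification of rational elliptic surfaces. So you will not be able to exhibit a non-torsion section on $E_{24}(a)$, and the parallel you propose with Theorem~\ref{thm222_rank} does not exist.

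Fortunately, the argument does not need it. The paper's logic is: the $222$-surface has generic rank $2$, so by Masser--Zannier the set of $a$ where \emph{both} generators $P(a)$, $Q(a)$ become torsion (equivalently, where the $222$-rank drops to zero) is already finite. The set $S$ is then the subset of this finite set where $E_{24}(a)$ also happens to have rank $0$. The $24$-surface thus enters only as a possible rescue at the finitely many bad $a$ for the $222$-surface, not as an independent source of genericity. Your invocation of Silverman specialization is also weaker than what is needed: it gives positive rank only outside a thin (generally infinite) set, whereas the finiteness of $S$ genuinely requires the Masser--Zannier input applied to the two independent sections of $E_{222}$.
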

    \begin{proof}
        The 22 curve over the function field $K(a)$ is the curve whose points correspond to two rational first pre-images and (at least) two rational second pre-images.  It has fibers of genus $0$ \cite{FHIJMTZ} and at least one $\QQ$-rational section for each choice of $a$, $(1,1,0)$. Thus, each fiber has infinitely many rational points and $\bar{\kappa}(a) \geq 4$.

        Theorem \ref{thm222_rank} shows that the $222$ surface has generic rank at least $2$ (exactly $2$ over $\QQ$).  Theorem \ref{thm24_rank} shows that the $24$ surface has generic rank $0$ over $\QQ$.  Let $S$ be the (possibly empty) set of $a$ values for which both the $222$ and $24$ surface specialize to rank $0$.  By \cite{Zannier} the set of $a$ values where the $222$ surface has rank $0$ is finite and thus, $S$ is finite.  If $a \in S \cap K$, $\bar{\kappa}(a) = 4$, otherwise $\bar{\kappa}(a) \geq 6$.
    \end{proof}

\subsection{Second Pre-Images}
    We consider the situation where the pre-image tree is full to the second level.  In other words, two rational first pre-images and four rational second pre-images,
    \begin{equation*}
        \xymatrix{& & a  & &\\
            & t \ar[ur]^{f_c}&  & -t\ar[ul]_{f_c}  &\\
            s \ar[ur]^{f_c} & & -s \quad u \ar[ul]_{f_c} \ar[ur]^{f_c}& &-u. \ar[ul]_{f_c}}.
    \end{equation*}
    We can define this curve over the function field $K(a)$ as
    \begin{equation*}
        X_{24} = V(s^2 - tz - (t^2-az^2), u^2+tz - (t^2-az^2)) \subseteq \PP^3_{K(a)}.
    \end{equation*}
    The fibers (when nonsingular) have genus one with at least one rational section $(1,1,1,0)$ so we can produce a minimal Weierstrass model (using Magma \cite{magma}) as an elliptic curve over the function field $K(a)$ as
    \begin{equation*}
        E_{24}(a): v^2w = u^3 + (4a - 1)u^2w + 16auw^2 + (64a^2 - 16a)w^3
    \end{equation*}
    with $j$-invariant
    \begin{equation*}
        j(a) = \frac{(16a^2 - 56a + 1)^3}{a(4a+1)^4}
    \end{equation*}
    and discriminant
    \begin{equation*}
        \Delta(a) = a(4a+1)^4.
    \end{equation*}
    The only fibers which are not elliptic curves are $a=0$ and $a=-\frac{1}{4}$.  This is in fact a rational elliptic surface since it has a Weierstrass model satisfying $\deg(a_i) \leq i$ for $a_i$ the coefficients of an elliptic curve in Weierstrass form \cite[page 237]{shioda}.

    \begin{thm} \label{thm24_rank}
        $E_{24}(a)(\QQ(a))$ has rank $0$ and torsion subgroup $\ZZ/4\ZZ$ generated by
        \begin{equation*}
            T(a) = (2,8a+2,1).
        \end{equation*}
    \end{thm}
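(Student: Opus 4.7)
The plan is to verify the order of the section $T(a)$ directly and then use Silverman's specialization theorem to pin down both the rank and the full Mordell--Weil group.

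Substituting $u = 2$ and $v = 8a+2$ into the Weierstrass equation gives $64a^2 + 32a + 4$ on both sides, confirming that $T(a)$ is a section. The tangent slope at $T$ is
\begin{equation*}
\lambda = \frac{3u^2 + 2(4a-1)u + 16a}{2v}\bigg|_{T(a)} = \frac{8 + 32a}{16a+4} = 2,
\end{equation*}
so the duplication formula yields $u_{2T} = \lambda^2 - (4a-1) - 2u = 1 - 4a$ and $v_{2T} = \lambda(u - u_{2T}) - v = 2(1 + 4a) - (8a+2) = 0$. The defining cubic in $u$ factors over $\QQ(a)$ as
\begin{equation*}
u^3 + (4a-1)u^2 + 16au + (64a^2 - 16a) = (u + 4a - 1)(u^2 + 16a),
\end{equation*}
so $(1-4a, 0, 1)$ is the unique $\QQ(a)$-rational point of order $2$, and $T(a)$ has order exactly $4$. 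In particular the torsion of $E_{24}(a)(\QQ(a))$ contains $\langle T \rangle \cong \ZZ/4\ZZ$.

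To finish, I invoke Silverman's specialization theorem: for all but finitely many $a_0 \in \QQ$ the specialization homomorphism $\sigma_{a_0}\col E_{24}(a)(\QQ(a)) \to E_{24}(a_0)(\QQ)$ is injective. Specializing at $a_0 = 1$ gives the elliptic curve $v^2 = u^3 + 3u^2 + 16u + 48 = (u+3)(u^2 + 16)$ over $\QQ$, and a \textit{Magma} computation (or a 2-descent via the rational 2-torsion point $(-3, 0) = 2T(1)$) shows that this curve has rank $0$ and torsion $\ZZ/4\ZZ$ generated by $T(1) = (2, 10)$. Consequently, $E_{24}(a)(\QQ(a))$ has rank $0$, its torsion injects into $\ZZ/4\ZZ$, and already contains $\langle T(a)\rangle \cong \ZZ/4\ZZ$ from the previous step, so it equals $\langle T(a) \rangle$.

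The main obstacle is verifying rank $0$ of the specialized curve: this is routine via computer algebra, but a by-hand 2-descent is somewhat tedious since the 2-isogenous curve $y^2 = X(X^2 + 12X - 64)$ has full rational 2-torsion and forces one to eliminate several square classes. An independent geometric check on the rank is provided by Shioda--Tate applied to this rational elliptic surface: the discriminant $\Delta(a) = a(4a+1)^4$, together with the $j$-invariant analysis and Tate's algorithm at infinity, identifies the bad fibers as $I_1$ at $a = 0$, $I_4$ at $a = -1/4$, and $I_1^*$ at $a = \infty$ (with Euler numbers summing to $12$), yielding geometric rank $10 - 2 - (0 + 3 + 5) = 0$. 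The delicate step in this alternative route is the minimization of the Weierstrass model at $a = \infty$, which can be sanity-checked against the identity $\sum e_v = 12$.
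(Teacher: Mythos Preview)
Your torsion argument is correct and in fact more explicit than the paper's: the paper simply asserts that $T(a)=(2,8a+2,1)$ has order $4$ and then, exactly as you do, specializes at $a_0=1$ and uses injectivity of specialization on torsion at nonsingular fibers to bound the torsion above by $\ZZ/4\ZZ$.

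Your primary argument for rank $0$, however, has a gap. Silverman's specialization theorem guarantees injectivity of $\sigma_{a_0}$ only for $a_0$ outside some finite set, and you have no way of knowing that $a_0=1$ lies outside that set. A single rank-$0$ fiber does not force the generic rank to vanish: a hypothetical non-torsion section of $E_{24}(a)$ could perfectly well specialize to a torsion point at $a_0=1$. (Injectivity on torsion at good fibers is automatic, but injectivity on the free part is not.) What actually proves rank $0$ in your write-up is the ``independent geometric check'' you append at the end, and this is exactly the paper's approach: compute the Kodaira symbols $I_1$, $I_4$, $I_1^*$ and read off rank $0$ from the Oguiso--Shioda classification of rational elliptic surfaces (equivalently from Shioda--Tate, $10-2-(0+3+5)=0$, as you write). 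So your proof is ultimately correct, but you should promote the Shioda--Tate computation to the main argument for the rank and drop the appeal to a single specialization; the latter is useful only as a consistency check, not as a proof.
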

    \begin{proof}
        We use \cite[Main Theorem]{oguiso} to see that the rank over $\QQ(a)$ is zero.  We compute the Kodaira symbols in Magma to get
        \begin{equation*}
            [ <I4, 1>, <I1*, 1>, <I1, 1> ].
        \end{equation*}
        From row 72 in the table \cite{oguiso} we have that the rank of $E_{24}(a)(\QQ(a))$ is zero.  Examining the torsion, we see that the point
        \begin{equation*}
            (2,8a+2,1)
        \end{equation*}
        has order $4$ and the specialization $E_{24}(1)(\QQ)$ has torsion subgroup $\ZZ/4\ZZ$.  Since the specialization map is injective on torsion on all nonsingular fibers, $E_{24}(a)$ has torsion subgroup exactly $\ZZ/4\ZZ$.
    \end{proof}

\subsection{Third Pre-Images}
    From Theorem \ref{thm_FH_model} we see that the elliptic surface parameterizing third pre-images of $a$ over the function field $K(a)$ is given by
    \begin{equation*}
        X_{222}=V(z_2^2 + z_1z_3 - z_0^2 - az_3^2, z_2^2 + z_2z_3 - z_1^2 - az_3^2) \subseteq \PP^{3}_{K(a)}.
    \end{equation*}
    Using the cuspidal point $(-1,1,1,0)$ from Theorem \ref{thm_FH_model} as the section at infinity we can find a minimal model in Magma as
    \begin{align*}
        E_{222}(a):v^2w &= u^3 + (16a + 942/13)u^2w + (10048/13a + 293084/169)uw^2 + (1024a^2 \\
            &+ 1620800/169a + 30250696/2197)w^3
    \end{align*}
    with $j$-invariant
    \begin{equation*}
        j(a) =\frac{(16a^2 + 3)^2}{(4a + 1)^2(256a^3 + 368a^2 + 104a + 23)}
    \end{equation*}
    and discriminant
    \begin{equation*}
        \Delta(a) = (4a + 1)^2(256a^3 + 368a^2 + 104a + 23).
    \end{equation*}
    As expected, the only fibers which are not elliptic curves are the fibers over $a= -1/4$ and the three $3\rrd$ critical values.
    This is in fact a rational elliptic surface since it has a Weierstrass model satisfying $\deg(a_i) \leq i$ for $a_i$ the coefficients of an elliptic curve in Weierstrass form \cite[page 237]{shioda}.

    \begin{thm} \label{thm222_rank}
        $E_{222}(a)(\QQ(a))$ has rank $2$ generated by the two independent sections
        \begin{equation*}
            P(a) = \left(-\frac{262}{13},32a+8,1\right) \quad \text{and} \quad Q(a) = \left(-\frac{366}{13},32a+8,1\right).
        \end{equation*}
    \end{thm}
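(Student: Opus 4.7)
The plan is to mimic the strategy already used for Theorem \ref{thm24_rank}, proceeding in three steps: establish an upper bound on the rank from the surface structure, exhibit the two sections explicitly, and verify their independence via specialization.

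First, to bound the rank from above, I would exploit that $E_{222}(a)$ is a rational elliptic surface (as noted in the excerpt, the Weierstrass model satisfies $\deg(a_i) \le i$) and apply \cite[Main Theorem]{oguiso}. The singular fibers sit over the zeros of the discriminant $\Delta(a) = (4a+1)^2(256a^3 + 368a^2 + 104a + 23)$ together with $a = \infty$, i.e.\ over $a = -1/4$, over each of the three third critical values (roots of the irreducible cubic), and over infinity. I would compute the Kodaira symbol at each of these places in Magma and match the resulting configuration to the corresponding row of the Oguiso--Shioda classification table, which should pin down the Mordell--Weil rank at exactly $2$.

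Second, I would verify by direct substitution that $P(a) = (-262/13,\, 32a+8,\, 1)$ and $Q(a) = (-366/13,\, 32a+8,\, 1)$ satisfy the Weierstrass equation for $E_{222}(a)$; this is a routine symbolic check.

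Third, to show $P$ and $Q$ are $\ZZ$-independent modulo torsion, I would use the specialization theorem: choose a rational value $a_0$ lying over a nonsingular fiber (so the specialization map is injective on the free part), compute the specialization $E_{222}(a_0)$ explicitly, and use Magma's descent machinery to verify $\rank E_{222}(a_0)(\QQ) \geq 2$ with $P(a_0)$ and $Q(a_0)$ independent in $E_{222}(a_0)(\QQ)/\tors$. An equivalent option is to compute the $2\times 2$ canonical height pairing matrix on the rational elliptic surface using Shioda's intersection formula for sections and check that its determinant is nonzero.

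The main obstacle is the first step: correctly identifying the Kodaira symbols (especially at the cuspidal third-critical-value fibers and at infinity) and reading off the right row of the Oguiso--Shioda table. Once the upper bound is secured, the remaining steps are computational. Verifying independence in step three is technically delicate if the specialization value produces a curve of large rank, so a careful choice of $a_0$ where descent terminates cleanly will be important.
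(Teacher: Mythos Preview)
Your proposal follows essentially the same route as the paper: compute the Kodaira symbols and invoke the Oguiso--Shioda table to pin the rank at exactly $2$, then use a specialization to handle the two sections. The paper adds two small refinements you should include: it specializes at $a=0$ to show the torsion subgroup is trivial, and at $a=4$ it checks that $P(4)$ and $Q(4)$ actually \emph{generate} the Mordell--Weil group (Magma's generators there are $P(4)$ and $P(4)+Q(4)$), which is stronger than mere independence and is what the theorem asserts.
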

    \begin{proof}
        We use \cite[Main Theorem]{oguiso} to see that the rank over $\QQ(a)$ is exactly two.  We compute the Kodaira symbols in Magma to get
        \begin{equation*}
            [ <I1, 3>, <I2, 1>, <I1*, 1> ].
        \end{equation*}
        From row 30 in the table \cite{oguiso} we have that the rank of $E_{222}(a)(\QQ(a)) = 2$.  Since the specialization map is injective on torsion on all fibers where $E_{222}$ is nonsingular, and the specialization $E_{222}(0)$ has no torsion, there are no rational torsion sections.
        We can see $P(a)$ and $Q(a)$ are actually the generators by finding a specialization $E_{222}(a_0)$ which is rank $2$ with generators $P(a_0)$ and $Q(a_0)$.  For $a=4$ we have
        \begin{align*}
            E_{222}(4): v^2w = u^3 + 1774/13u^2w + 815580/169uw^2 + 150527944/2197w^3
        \end{align*}
        and from Magma the generators are
        \begin{equation*}
            (-262/13, 136, 1)\quad \text{and} \quad (-1146/13, 136, 1).
        \end{equation*}
        In terms of $P(4)$ and $Q(4)$ these are
        \begin{equation*}
            P(4) \quad \text{and} \quad P(4) + Q(4).
        \end{equation*}
        Thus, $P(4)$ and $Q(4)$ generate the Mordell-Weil group $E_{222}(4)$ and, hence, $P(a)$ and $Q(a)$ generate the Mordell-Weil group of $E_{222}(a)$.
    \end{proof}


\section{Arrangements of Eight or More Pre-Images} \label{sect_upper}
   We examine when the genus of the fibers of pre-image surfaces of various arrangements of $2N$ pre-images is greater than $1$ and, thus, by Falting's theorem have a finite number of rational points over an algebraic number field.  In particular, if every $2N$ arrangement has genus greater than $1$ for some $N$, then $\bar{\kappa}(a) < 2N$.  The difficulty lies in determining the genus when the fiber is singular.  We treat the nonsingular case in the following theorem.
    \begin{thm}
        If the curve (fiber) defining an arrangement of $2N$ rational pre-images of $a$ is nonsingular, then it has genus $(N-3)2^{N-2}+1$.
    \end{thm}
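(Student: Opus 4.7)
The plan is to realize the arrangement curve as a smooth complete intersection of $N - 1$ quadrics in $\PP^N$, after which the claimed genus drops out of the standard adjunction computation. Because $f_c$ is an even function of $x$, rational pre-images at each level come in $\pm$ pairs, so the $2N$ rational pre-images group into $N$ pairs; let $w_1, \ldots, w_N$ denote chosen representatives. The combinatorics of the arrangement record, for each non-root pair $\alpha$, a parent pair $\operatorname{par}(\alpha)$ one level closer to $a$ together with a sign $\epsilon_\alpha \in \{\pm 1\}$ specifying which of $\pm w_{\operatorname{par}(\alpha)}$ is the $f_c$-image of the pair; the unique level-$1$ pair has $a$ as its parent by convention. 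Each pair $\alpha$ thus contributes one quadratic relation
\begin{equation*}
    w_\alpha^2 + c \;=\; \epsilon_\alpha\, w_{\operatorname{par}(\alpha)},
\end{equation*}
with $w_{\operatorname{root}} = a$, giving $N$ quadrics in the $N + 1$ affine variables $c, w_1, \ldots, w_N$ cutting out the arrangement curve over $K(a)$.

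I use the level-$1$ equation $w_1^2 + c = a$ to eliminate $c = a - w_1^2$ from the remaining $N - 1$ equations, obtaining $N - 1$ quadrics in $w_1, \ldots, w_N$ alone. Homogenizing by adjoining one more variable produces $N - 1$ quadric hypersurfaces in $\PP^N_{K(a)}$ whose vanishing locus contains the projective closure of the arrangement curve. The expected dimension of a proper intersection of $N - 1$ hypersurfaces in $\PP^N$ is one; by hypothesis the arrangement curve is nonsingular of pure dimension one, so the intersection is proper and the curve is a smooth complete intersection of $N - 1$ quadrics in $\PP^N$.

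Applying the adjunction formula for a complete intersection of hypersurfaces of degrees $d_1, \ldots, d_{N-1}$ in $\PP^N$ gives
\begin{equation*}
    g \;=\; 1 + \tfrac{1}{2}\!\left(\textstyle\sum_i d_i - (N+1)\right)\!\prod_i d_i \;=\; 1 + \tfrac{1}{2}(N - 3)\cdot 2^{N-1} \;=\; (N - 3)\,2^{N-2} + 1,
\end{equation*}
which is the claimed formula; in the case of the full $N$-chain arrangement this recovers Theorem~\ref{thm_genus_formula} and the model $V(J)$ of Theorem~\ref{thm_FH_model}. The main obstacle is justifying the complete-intersection step, i.e.\ ruling out excess or embedded components so that the $N - 1$ quadrics cut out the arrangement curve scheme-theoretically; this should follow from the smoothness hypothesis together with the agreement of expected and actual dimension, via standard Cohen--Macaulay and unmixedness arguments.
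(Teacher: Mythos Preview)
Your approach is essentially the same as the paper's: both eliminate $c$ to present the arrangement curve as an intersection of $N-1$ quadrics in $\PP^N$, verify it is a complete intersection, and then read off the genus from a standard formula. The only cosmetic difference is in the final step---the paper invokes the arithmetic-genus formula for complete intersections via the Hilbert polynomial (citing Hirzebruch and Arslan) and then appeals to \cite{FHIJMTZ} for the closed form $(N-3)2^{N-2}+1$, whereas you apply adjunction directly to obtain $g=1+\tfrac{1}{2}\bigl(2(N-1)-(N+1)\bigr)2^{N-1}$, which is arguably cleaner.
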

    \begin{proof}
        A complete intersection in $\PP^m$ is defined as a subscheme $Y$ of $\PP^m$ whose homogeneous ideal $I$ can be generated by $r = \codim(Y,\PP^m)$ elements \cite[Exercise II.8.4]{Hartshorne}.  Each surface arranging $2N$ points can be described by the equations
        \begin{equation*}
            f_c(z_1) = a \quad \text{ and } \quad f_c(z_i) = (-1)^{\epsilon} z_{j} \text{ for } 2 \leq i \leq N
        \end{equation*}
        where $1 \leq j < N$ and $\epsilon = \pm 1$ depending on the arrangment of points.  Homogenizing and eliminating $c$ from this system of equations describes each fiber as a curve defined by $N-1$ degree two hypersurfaces in $\PP^{N}$ and, hence, a complete intersection.  From \cite[\S 22]{Hirzebruch} or \cite[Corollary 2]{Arslan} we get a formula for the arithmetic genus of a complete intersection of $N-1$ degree two hypersurfaces in $\PP^{N}$ as
        \begin{equation*}
            p_a = \sum_{m=1}^{N-1} (-1)^{m+1} \binom{N-1}{m} \phi_N(-2m)
        \end{equation*}
        where $\phi_N(z)$ comes from the Hilbert polynomial of the $2N$ curve and is given by
        \begin{equation*}
            \phi_N(z) = \frac{(z+1)(z+2) \cdots (z+N)}{N!} = \binom{z+N}{N}.
        \end{equation*}
        Since the arithmetic genus is equal to the geometric genus for nonsingular curves \cite[Proposition IV.1.1]{Hartshorne}, the genus is independent of the arrangement of the pre-images and from \cite[Theorem 1.5]{FHIJMTZ} we get the simpler formula
        \begin{equation*}
            g = (N-3)2^{N-2}+1.
        \end{equation*}
    \end{proof}
    \begin{cor} \label{core_nonsing_a}
        If the curve (fiber) defining an arrangement of $2N$ rational pre-images of $a$ is nonsingular, then the genus is greater than $1$ for $2N \geq 8$.
    \end{cor}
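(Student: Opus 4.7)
The plan is to reduce the corollary to a direct numerical check of the genus formula established in the preceding theorem. Under the nonsingularity hypothesis, the curve has genus $g = (N-3)2^{N-2}+1$, so showing $g > 1$ is equivalent to showing $(N-3)2^{N-2} > 0$. The condition $2N \geq 8$ translates to $N \geq 4$, in which range the factor $N-3$ is a positive integer and $2^{N-2}$ is a positive integer, so the product is strictly positive.

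Concretely, I would substitute $N = 4$ to obtain $g = 1 \cdot 4 + 1 = 5$, handling the boundary case, and then note that both $N-3$ and $2^{N-2}$ are strictly increasing in $N$ for $N \geq 4$, so $g$ is strictly increasing and hence $g \geq 5 > 1$ for every $N \geq 4$. There is no real obstacle: the corollary is an immediate consequence of the preceding theorem together with the observation that $N-3 \geq 1$ whenever $2N \geq 8$.
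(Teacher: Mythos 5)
Your proposal is correct and matches the paper's reasoning: the corollary is an immediate consequence of the genus formula $g=(N-3)2^{N-2}+1$ from the preceding theorem, and the paper itself offers no further proof. Your substitution $N=4$ giving $g=5$ and the monotonicity remark are exactly the intended (trivial) verification.
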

    We have thus reduced the computation of $\bar{\kappa}(a,K)$ to checking $a$ values where the fiber is singular for arrangements with $8$ (or more) rational pre-images (224, 242, 2222).  The method is as follows.
    \begin{enumerate}
        \item Using the Jacobian criterion, determine all of the singular fibers ($a$ values).
        \item Determine the $\delta$-invariants of each singular point to determine the genus of each singular fiber.
    \end{enumerate}
    Recall that the $\delta$-invariant of a singularity $P$ is defined as
    \[ \delta_P = \sum_{Q} \frac{1}{2}m_Q(m_Q-1),\]
    where the sum ranges over the infinitely near points of $P$ and $m_Q$ are their multiplicities.  See \cite[Section 3.2]{RationalAlgebraicCurves} for the basic definitions and the case of plane curves and \cite[Section 9.2, Theorem 7]{PlaneAlgebraicCurves} for a more general discussion.
    As the singularity analysis computations are identical in form for all of the singularities, we outline the method, include the first such computation, and omit the details for the other singularities.  The singularity analysis proceeds as follows.
    \begin{enumerate}
        \item Let $C\subseteq \PP^N$ be a singular curve with singular point $P$. We move $P$ to $(0,\ldots,0,1)$ and dehomogenize.
        \item Project onto a singular plane curve with isomorphic tangent space at the singular point.
        \item Analyze the singularity of the plane curve with blow-ups and compute the $\delta$-invariant.
    \end{enumerate}

\subsection{Examining the 224 Surface}
    One possible 224 arrangement of $8$ pre-images is
    \begin{equation*}
        \xymatrix{& & & & a &\\
            & & & t \ar[ur]^{f_c}&  & -t\ar[ul]_{f_c} \\
            & & s \ar[ur]^{f_c} & & -s \ar[ul]_{f_c} &\\
            & q \ar[ur]^{f_c} & & -q \ar[ul]_{f_c} \quad r\ar[ur]^{f_c} & & -r.\ar[ul]_{f_c}}
    \end{equation*}
    Every other 224 arrangement differs only by renaming, so this is the only distinct 224 arrangement.  The curve is defined by three degree two equations in $\PP^4$ as
    \begin{equation*}
        C_{224} = V(az^2-t^2 - (tz-s^2),az^2-t^2 - (sz-q^2),az^2-t^2 - (-sz-r^2)) \subseteq \PP^4_{K(a)}.
    \end{equation*}
    \begin{thm} \label{thm_224}
        The $a$ values for which the fiber of the $224$ surface is singular are given by
        \begin{equation*}
            a \in \left\{-\frac{1}{4},0,a_1,a_2,a_3 \right\}
        \end{equation*}
        where $a_1,a_2,a_3$ are the three $3\rrd$ critical values of $f_c$
    \end{thm}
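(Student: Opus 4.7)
The plan is to determine the singular fibers of the $224$ surface by applying the Jacobian criterion directly to the complete intersection $C_{224}\subseteq \PP^4_{K(a)}$ and then eliminating variables to obtain a polynomial in $a$ whose roots are the singular parameter values.

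First, write $C_{224} = V(F_1, F_2, F_3)$ with
\begin{align*}
F_1 &= az^2 - t^2 - tz + s^2, \\
F_2 &= az^2 - t^2 - sz + q^2, \\
F_3 &= az^2 - t^2 + sz + r^2.
\end{align*}
Since $C_{224}$ is a curve in $\PP^4$, a fiber is singular at a point exactly when the $3 \times 5$ Jacobian matrix of $(F_1, F_2, F_3)$ with respect to $(t, s, q, r, z)$ drops rank there. Form the ideal $J$ in $K[a, t, s, q, r, z]$ generated by $F_1, F_2, F_3$ together with all $3 \times 3$ minors of this Jacobian. The set of singular $a$-values is then the image of $V(J)$ under projection to the $a$-line, equivalently $V(J \cap K[a])$, and is computable in \textit{Magma} by a Gr\"obner basis elimination of $t, s, q, r, z$.

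I expect the resulting principal ideal in $K[a]$ to be generated, up to a unit and multiplicity, by $a \cdot (4a+1) \cdot (256a^3 + 368a^2 + 104a + 23)$. The cubic factor coincides with the nontrivial factor of the discriminant $\Delta(a)$ of $E_{222}(a)$ computed in Section \ref{sect_lower}, whose roots are by Definition \ref{defn_critical_values} exactly the three $3\rrd$ critical values $a_1, a_2, a_3$. Reading off the roots yields $\{0, -1/4, a_1, a_2, a_3\}$. To make sure each listed value genuinely produces a singular fiber (and is not just a spurious factor introduced by elimination), I would exhibit an explicit singular point on $C_{224,a}$ in each case: at $a=0$ the first pre-image pair $\pm t$ collapses at $t = 0$; at $a = -\frac{1}{4}$ the choice $t = -\frac{1}{2}, s = 0$ makes the second-level pre-images coincide and one checks the Jacobian drops rank; at each critical value $a_i$ the corresponding $c_i$ with $f_{c_i}^3(0) = a_i$ and $(f_c^3)'|_{c=c_i} = 0$ provides a ramification point of the projection to the $c$-line, producing a singular point on the fiber.

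The main obstacle is the elimination itself: the ideal $J$ sits in a six-variable polynomial ring with many sizeable minors, so the Gr\"obner basis may be costly and depends strongly on the monomial order. In practice this is mitigated by exploiting the symmetries $q \leftrightarrow r$ combined with $s \leftrightarrow -s$, and by working fiber-by-fiber: substituting each candidate root of the cubic (or $a = 0, -\tfrac14$) into $J$ and verifying by a direct rank computation in \textit{Magma} that the Jacobian drops rank, and that for a generic $a$ outside this finite set the Jacobian has rank $3$ everywhere on $C_{224,a}$. This produces both containments and proves the theorem.
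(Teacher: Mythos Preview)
Your approach is sound and would work, but it differs from the paper's in a meaningful way. Both proofs start from the Jacobian criterion, but the paper proceeds by \emph{explicit case analysis}: it treats the hyperplane $z=0$ separately (showing the eight cuspidal points are always nonsingular via one nonvanishing minor), then on the chart $z\neq 0$ writes down the four $3\times 3$ minors $\{8qrs,\;4qr(-2t-1),\;2q(4st-2t-1),\;-2r(4st+2t+1)\}$ and enumerates by hand the ways all four can vanish simultaneously, tracing each subcase back to a value of $c$ and hence of $a$. Your plan instead packages everything into a single elimination ideal and reads off the factorization in $K[a]$.

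What each buys: the paper's case analysis is self-contained (no Gr\"obner computation to trust), and more importantly it produces the explicit singular points on each bad fiber as a by-product --- exactly what is needed immediately afterward in Theorem~\ref{thm_g224} for the $\delta$-invariant calculations. Your elimination is cleaner and less error-prone in principle, and your identification of the cubic factor with the nontrivial factor of $\Delta(E_{222})$ is a nice structural observation. Two technical points to tighten: when forming $J$ projectively you must saturate by the irrelevant ideal $(t,s,q,r,z)$ before eliminating, or the origin will contaminate the answer; and your final ``generic $a$'' check is not quite the right statement --- what you actually need is that the elimination (after saturation) is \emph{exact}, i.e.\ that $V(J\cap K[a])$ equals the projection of $V(J)$, which holds here because the projection is proper. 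With those two caveats your argument goes through.
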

    \begin{proof}
    	We apply the Jacobian criterion to determine the singular points.  For each singular point, we can determine the associated $a$ value(s).
        Examining the hyperplane at infinity $z=0$ we have the $8$ cuspidal points $(\pm 1, \pm 1, \pm 1, 1, 0) \in \PP^4$.  To check the singularity of these points, we use the Jacobian criterion on the affine chart $\Aff^4_{q \neq 0}$ with generators
        \begin{equation*}
            \{az^2-t^2 - (tz-s^2), az^2-t^2 - (sz-1), az^2-t^2 - (-sz-r^2)\}
        \end{equation*}
        to have the Jacobian matrix at $z=0$
        \begin{equation*}
            \begin{pmatrix}
              0 & 2s & -2t & -t\\
              0 & 0 & -2t & -s\\
              2r & 0 & -2t & s
            \end{pmatrix}.
        \end{equation*}
        The determinant of one such maximal minor is $-8rst$, and since $r,s,t \neq 0$, this is nonzero, so the cuspidal points are all nonsingular.

        Now we consider the points in the affine chart $\Aff^4_{z \neq 0}$ which has generators
        \begin{equation*}
            \{a-t^2 - (t-s^2), a-t^2 - (s-q^2), a-t^2 - (-s-r^2)\}.
        \end{equation*}
        The Jacobian matrix is given by
        \begin{equation*}
            \begin{pmatrix}
              0 & 0 & 2s & -2t-1\\
              2q & 0 & -1 &  -2t\\
              0 & 2r & 1 &  -2t
            \end{pmatrix}
        \end{equation*}
        and the determinants of the maximal minors are
        \begin{equation*}
            \{8qrs, 4qr(-2t-1), 2q(4st-2t-1), -2r(4st +2t+1) \}.
        \end{equation*}
        The combinations that result in all $4$ determinants vanishing are the following.
        \begin{enumerate}
            \item If $q=r=0$, then we have $c=\pm s$ and so $c=0$ and so $a=0$.

            \item If $q=0$ and $(4st+2t+1)=0$, then we must have $s\neq -\frac{1}{2}$ so we can solve $t=-\frac{1}{4s+2} = -\frac{1}{4c+2}$.  Then we have $s^2+c =c^2+c=t$ and the roots of $4c^3 + 6c^2 + 2c + 1=\frac{df_c^3(0)}{dc}$ combined with $a = f_c(f_c(f_c(0)))$ to get the three $3\rrd$ critical values.

            \item If $q \neq 0$,  $r=0$, and $(4st - 2t- 1)=0$, then we must have $t\neq 0$ and we can solve $s=\frac{2t+1}{4t} = -c$.  Then we have $s^2-s = t$ and the roots of $16t^3 + 4t^2 - 1$ which give the three $3\rrd$ critical values.

            \item If $q,r \neq 0$, $s=0$, and $t=-\frac{1}{2}$, then we have $c=-\frac{1}{2}$ and so $a=-\frac{1}{4}$.
        \end{enumerate}
%
%
    \end{proof}
    We will treat $a=-\frac{1}{4}$ in Section \ref{sectminus1fourth}.
    \begin{thm} \label{thm_g224}
        The genus of $C_{224}$ is
        \begin{equation*}
            g = \begin{cases}
              4 & a=0\\
              1 & a \in \{a_1,a_2,a_3\}
            \end{cases}
        \end{equation*}
        where $a_1,a_2,a_3$ are the three $3\rrd$ critical values of $f_c$.
    \end{thm}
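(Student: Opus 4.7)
The plan is to apply the genus formula $g = p_a - \sum_P \delta_P$, where $p_a$ is the arithmetic genus of the fiber and $\delta_P$ is the local $\delta$-invariant at each singular point $P$. Since $C_{224}$ is generically cut out by three quadrics in $\PP^4$, the preceding theorem (with $N=4$) gives $p_a = (4-3) 2^{4-2} + 1 = 5$. The family $\cpc{224} \to \Aff^1_a$ is flat, so every fiber has arithmetic genus $5$, and it remains to compute $\sum_P \delta_P$ at each singular fiber classified in Theorem \ref{thm_224}.

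For $a = 0$, case (a) of that proof produces a unique affine singular point, namely the origin $(q,r,s,t) = (0,0,0,0)$. I would follow the outlined procedure: use two of the defining relations (which at $a=0$ read $q^2 = s + t^2$ and $r^2 = t^2 - s$) to locally solve for $s$ and eliminate one of the remaining variables, then project onto a plane containing the tangent space at the origin. The lowest-order term of the resulting plane curve equation is a nondegenerate quadratic form whose zero locus is a pair of distinct lines over $\bar{K}$; this identifies the singularity as an ordinary node with $\delta = 1$. Hence $g = 5 - 1 = 4$.

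For each critical value $a = a_i$, cases (b) and (c) together yield four distinct affine singular points, namely $(0, \pm\sqrt{-2c_0}, c_0, c_0^2 + c_0)$ from case (b) and $(\pm\sqrt{-2c_0}, 0, -c_0, c_0^2 + c_0)$ from case (c), where $c_0$ is the critical parameter with $f_{c_0}^3(0) = a_i$. These points are distinct because $c_0 \ne 0$ (the cubic $4c^3 + 6c^2 + 2c + 1$ has nonzero constant term). The curve carries the automorphisms $q \mapsto -q$, $r \mapsto -r$, and $(q, r, s) \mapsto (r, q, -s)$ (the last of which swaps the second and third defining equations and hence interchanges cases (b) and (c)); these act transitively on the four singular points. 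Analyzing a single representative via the same project-and-blow-up procedure identifies an ordinary node with $\delta = 1$, giving $\sum_P \delta_P = 4$ and $g = 5 - 4 = 1$.

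The main obstacle is the explicit local analysis at each singular point: one must choose coordinates so that the projection onto a plane faithfully reproduces the analytic type of the singularity, and then verify that the tangent cone splits into two distinct lines rather than degenerating to a double line (which would force $\delta_P \geq 2$). As a useful consistency check, the transitive symmetry action together with $\delta_P \ge 1$ at each critical-fiber singularity and the bound $\sum_P \delta_P \le p_a = 5$ already force $\delta_P = 1$ at each point, pinning down $g = 1$ independently of the detailed blow-up computation.
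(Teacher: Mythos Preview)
Your approach is essentially the paper's: compute the arithmetic genus $p_a=5$ of the complete-intersection fiber, locate the singular points from Theorem~\ref{thm_224}, project each to a plane curve preserving the tangent cone, and read off $\delta_P=1$ from a single blow-up. The paper carries out the projection and blow-up in full detail for one of the four critical-fiber singularities and then asserts ``a similar analysis'' for the rest; your use of the automorphisms $q\mapsto -q$, $r\mapsto -r$, and $(q,r,s)\mapsto(r,q,-s)$ to make those four points a single orbit is a clean way to justify that reduction, and is not made explicit in the paper.

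One caveat on your final consistency check: the inequality $\sum_P\delta_P\le p_a$ comes from $g\ge 0$, which presupposes that the singular fiber is \emph{irreducible}. For a reducible fiber with $k$ components of geometric genera $g_i$ one has $p_a-\sum_P\delta_P=\sum g_i-(k-1)$, which can be negative, so the bound $4\delta\le 5$ would not follow automatically. You therefore either need to verify irreducibility of $C_{224,a_i}$ (e.g.\ by checking that the plane-curve image is irreducible, or by a degree/component count), or else fall back on the direct $\delta_P=1$ computation at a representative point together with the symmetry, as the paper effectively does. With that point addressed, your argument is complete and matches the paper's.
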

    \begin{proof}
        There is one singular point for $a=0$ and four singular points for each $a_i$.  In all cases $\delta_P = 1$ so the genus drops by 1 for each singular point.

        We now compute the $\delta$-invariant of one of the singular points for $a_1$.  The 224 curve for $a_1$ is defined as
        \begin{equation*}
            V(a_1z^2-t^2 - (tz-s^2),a_1z^2-t^2 - (sz-q^2),a_1z^2-t^2 - (-sz-r^2))
        \end{equation*}
        and if $\alpha$ is a root of
        \begin{equation*}
            4x^3+6x^2 + 2x + 1
        \end{equation*}
        then
        \begin{equation*}
            a_1 = \alpha^4 + 2\alpha^3 + \alpha^2 + \alpha = -1/4\alpha^2 + 1/2\alpha - 1/8.
        \end{equation*}
        We label the coordinates as $(q,r,s,t,z)$ and the singular point is
        \begin{equation*}
            P = (0,-\beta, \alpha, \alpha^2 + \alpha,1)
        \end{equation*}
        where
        \begin{equation*}
            \beta^2 = -2\alpha.
        \end{equation*}
        We move $P$ to $(0,0,0,0,1)$ with a translation
        \begin{equation*}
            (q,r,s,t,z) \mapsto (q,r-\beta z,s+\alpha z,t + (\alpha^2 + \alpha)z)
        \end{equation*}
        to get a new curve $\widetilde{C}$ and singular point $\widetilde{P} = (0,0,0,0,1)$.  We dehomogenize to affine coordinates $(Q,R,S,T) = \left(\frac{q}{z},\frac{r}{z},\frac{s}{z},\frac{t}{z}\right)$ and compute the tangent space at $\widetilde{P}$ as
    \begin{equation} \label{eq_ts}
        \left\{
        \begin{array}{l}
            -2T\alpha^2 -2T\alpha - T + 2S\alpha =0 \\
            -2T\alpha^2 - 2T\alpha - S =0 \\
            -2T\alpha^2 - 2T\alpha + S - 2\beta R=0.
        \end{array} \right.
    \end{equation}
        Notice that the second equation of (\ref{eq_ts}) implies the first using the degree 4 polynomial satisfied by $\alpha$.  Thus, the tangent space is given by
        \[
        	\left\{
        	\begin{array}{l}
            -2T\alpha^2 - 2T\alpha - S =0 \\
            -2T\alpha^2 - 2T\alpha + S - 2\beta R =0.
            \end{array} \right.
        \]
        Since we want to project $\widetilde{C}$ to a plane curve preserving the tangent space at $\widetilde{P}$ we define
        \begin{align*}
            u&=-2T\alpha^2 - 2T\alpha - S\\
            v&=-2T\alpha^2 - 2T\alpha + S - 2\beta R.
        \end{align*}
        with inverse
        \begin{align*}
            S &= \beta R -\frac{u}{2} + \frac{v}{2}\\
            T &= \frac{\beta R}{-2\alpha^2 - 2\alpha} + \frac{u}{-4\alpha^2 - 4\alpha} + \frac{v}{-4\alpha^2 - 4\alpha}
        \end{align*}
        and make the change of variables $(Q,R,S,T) \mapsto (Q,R,u,v)$ to get a new curve $\widetilde{C}'$ and point $\widetilde{P}'$.  The tangent space at $\widetilde{P}'$ is given by $u=v=0$.  We now project $\widetilde{C}'$ onto a plane curve in the $QR$-plane.
        To project we eliminate the variables $u,v$ from the three defining equations of $\widetilde{C}'$ to get the single equation
        \begin{align*}
            &(2\alpha + 1)Q^8 + ((-8\alpha - 4)R^2 + (16\beta\alpha + 8\beta)R + (16\alpha^2 - 4))Q^6 + ((12\alpha + 6)R^4 + (-48\beta\alpha - 24\beta)R^3 \\
            &+ (-144\alpha^2 - 64\alpha + 4)R^2 + (96\beta\alpha^2 + 32\beta\alpha - 8\beta)R + (-64\alpha^2 - 24\alpha - 8))Q^4 + ((-8\alpha - 4)R^6 \\
            &+ (48\beta\alpha + 24\beta)R^5 + (240\alpha^2 + 128\alpha + 4)R^4 + (-320\beta\alpha^2 - 192\beta\alpha - 16\beta)R^3 \\
            &+ (384\alpha^2 + 208\alpha + 128)R^2 + (-128\beta\alpha^2 - 96\beta\alpha - 64\beta)R - 32\alpha)Q^2 + ((2\alpha + 1)R^8 \\
            &+ (-16\beta\alpha - 8\beta)R^7 + (-112\alpha^2 - 64\alpha - 4)R^6 + (224\beta\alpha^2 + 160\beta\alpha + 24\beta)R^5 \\
            &+ (-320\alpha^2 - 152\alpha - 136)R^4 + (128\beta\alpha^2 + 32\beta\alpha + 96\beta)R^3 + (-64\alpha^2 + 64\alpha)R^2)=0
        \end{align*}
        defining a plane curve in $\Aff^2$ with variables $(Q,R)$.  Notice that the only points of the form $(0,0,u,v)$ on $\widetilde{C}'$ is the point $(0,0,0,0)$ (in the other words, the singular point is the only point that projects onto $(0,0)$), so we proceed with analyzing the plane curve singularity $(0,0)$.  Blowing-up once resolves the singularity and we see that it has multiplicity 2.  So we compute
        \begin{equation*}
            \delta_{P} = \frac{1}{2}(2 \cdot 1) = 1.
        \end{equation*}

        A similar analysis is done on all of the other singularities to get $\delta_P = 1$ for all $P$ for all $a \in \{0,a_1,a_2,a_3\}$.  Hence, we have
        \begin{equation*}
            \begin{cases}
              g = 5 - 1 = 4 & \text{if } a=0\\
              g = 5-(1+1+1+1) = 1 & \text{if }a=a_1,a_2,a_3.
            \end{cases}
        \end{equation*}

%
%

    \end{proof}

\subsection{Examining the 242 Surface}
    One possible 242 arrangement of $8$ pre-images is
    \begin{equation*}
        \xymatrix{& & & a  & &\\
            & & t \ar[ur]^{f_c}&  & -t\ar[ul]_{f_c}  &\\
            & s \ar[ur]^{f_c} & & -s \quad u \ar[ul]_{f_c} \ar[ur]^{f_c}& &-u. \ar[ul]_{f_c}\\
            q \ar[ur]^{f_c} & & -q \ar[ul]_{f_c}& & &}
    \end{equation*}
    Every other 242 arrangement differs only by renaming, so this is the only distinct 242 arrangement.  The surface is defined by $3$ degree two equations in $\PP^4$ as
    \begin{equation*}
        C_{242}=V(az^2-t^2 - (tz-s^2),az^2-t^2 - (-tz-u^2),az^2-t^2 - (sz-q^2)) \subseteq \PP^4_{K(a)}.
    \end{equation*}
    \begin{thm}
        The $a$ values for which the fiber of the 242 surface is singular are given by
        \begin{equation*}
            a \in \left\{-\frac{1}{4},0,2,a_1,a_2,a_3 \right\}
        \end{equation*}
        where $a_1,a_2,a_3$ are the three $3\rrd$ critical values of $f_c$.
    \end{thm}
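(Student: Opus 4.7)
The plan is to mirror the argument of Theorem~\ref{thm_224} and apply the Jacobian criterion to $C_{242}$. First I would handle the hyperplane at infinity $z = 0$: the three defining quadrics restrict there to $s^2 = t^2$, $u^2 = t^2$, and $q^2 = t^2$, yielding the eight cuspidal points $(\pm 1, \pm 1, 1, \pm 1, 0)$. Dehomogenizing on the chart $q \neq 0$ and evaluating a single $3 \times 3$ minor of the Jacobian at any of these points produces $\pm 4$, independent of $a$, so none of them is singular.

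In the affine chart $z = 1$ the defining polynomials are
\begin{align*}
F_1 &= s^2 - t^2 - t + a,\\
F_2 &= u^2 - t^2 + t + a,\\
F_3 &= q^2 - t^2 - s + a.
\end{align*}
A short calculation gives the four $3 \times 3$ maximal minors of the Jacobian with respect to $(q, s, t, u)$ as
\begin{equation*}
2u(4st + 2t + 1), \quad -4qu(2t + 1), \quad 8qsu, \quad -4qs(2t - 1),
\end{equation*}
and the singular locus is cut out by their simultaneous vanishing together with $F_1 = F_2 = F_3 = 0$. I would then proceed by case analysis on which of $q, s, u$ vanish. Four natural scenarios emerge: (i) $u = q = 0$ forces $a = t^2 - t$ from $F_2$, $s = -t$ from $F_3$, and $F_1 = 0$ then gives $t(t-2) = 0$, yielding $a = 0$ and $a = 2$; (ii) $u = 0$ with $q, s \neq 0$ forces $t = 1/2$ by the fourth minor and then $a = -1/4$ from $F_2$; (iii) $u \neq 0$ with $s = 0$ forces $t = -1/2$ by the first minor and then $a = -1/4$ from $F_1$; (iv) $u, s \neq 0$ with $q = 0$, where $4st + 2t + 1 = 0$ combined with $F_1 = F_3 = 0$ eliminates $t$ to yield $4s^3 + 6s^2 + 2s + 1 = 0$.

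The most delicate step is case (iv), but it exactly reproduces the critical-value analysis from Theorem~\ref{thm_224}. Dynamically, $q = 0$ forces $s = f_c(0) = c$ via the identity $q^2 + c = s$, then $F_3 = 0$ gives $t = s^2 + c = f_c^2(0)$, and a direct check shows $4st + 2t + 1 = \tfrac{d}{dc} f_c^3(0)$. Combined with $a = s + t^2 = f_c^3(0)$, this recovers precisely the three $3\rrd$ critical values $a_1, a_2, a_3$. The remaining three cases are each short symbolic checks contributing a single $a$ value, so together the four cases exhaust the claimed list $\{-1/4, 0, 2, a_1, a_2, a_3\}$. A final sanity check exhibits an explicit singular point on $C_{242}$ over each of these $a$ values, confirming that every member of the list actually appears.
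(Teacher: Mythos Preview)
Your proposal is correct and follows essentially the same approach as the paper: apply the Jacobian criterion, first checking the cuspidal points at infinity and then case-splitting on the vanishing of the maximal minors in the affine chart $z=1$. Your four cases (i)--(iv) match the paper's cases (a), (d), (e), (b) respectively; the paper also lists a case $u=s=0$ which you omit, but that case forces $q=0$ as well and so is already subsumed in your (i). One cosmetic slip: the minor you evaluate at the cuspidal points is $\pm 8$ rather than $\pm 4$, though of course this does not affect the nonvanishing conclusion.
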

    \begin{proof}
        We apply the Jacobian criterion to determine the singular points.  For each singular point, we can determine the associated $a$ value(s).  Examining the hyperplane at infinity, $z=0$, we have the $8$ cuspidal points $(\pm 1, \pm 1, \pm 1, 1, 0) \in \PP^4$.  To check the singularity of these points, we use the Jacobian criterion on the affine chart $\Aff^4_{q \neq 0}$ with generators
        \begin{equation*}
            \{az^2-t^2 - (tz-s^2), az^2-t^2 - (-tz-u^2), az^2-t^2 - (sz-1)\}.
        \end{equation*}
        The Jacobian matrix at $z=0$ is given by
        \begin{equation*}
            \begin{pmatrix}
              2s & 0 & -2t & - t\\
              0 & 2u & -2t &  t\\
              0 & 0 & -2t &  - s
            \end{pmatrix}.
        \end{equation*}
        The determinant of one maximal minor is $-8sut$, and since $s,u,t \neq 0$, this is nonzero, so the cuspidal points are all nonsingular.

        Now we consider the points in the affine chart $\Aff^4_{z \neq 0}$ which has generators
        \begin{equation*}
            \{a-t^2 - (t-s^2), a-t^2 - (-t-u^2), a-t^2 - (s-q^2)\}.
        \end{equation*}
        The Jacobian matrix is given by
        \begin{equation*}
            \begin{pmatrix}
              0 & 2s & -2t - 1 & 0\\
              0 & 0 & -2t +1 &  2u\\
              2q & -1 & -2t &  0
            \end{pmatrix}.
        \end{equation*}
        The determinants of the maximal minors are
        \begin{equation*}
            \{ 2u(4st +2t+1), 4qu(-2t-1), 8qus, 4qs(-2t+1) \}.
        \end{equation*}
        The combinations that result in all $4$ vanishing are as follows:
        \begin{enumerate}
            \item If $q=0$ and $u=0$, then $f_c^2(0) = a$ and $f_c^3(0) =a$ which is the polynomial equation
                \begin{equation*}
                    f_c(f_c(f_c(0)))-f_c(f_c(0)) = c^4 + 2c^3 = c^3(c+2) =0
                \end{equation*}
                so $c=0$ or $c=-2$.  So we have $a=0$ or $a=2$.

            \item If $q=0$ and $(4st+2t+1)=0$, then we must have $s\neq -\frac{1}{2}$ so we can solve $t=-\frac{1}{4s+2} = -\frac{1}{4c+2}$.  Then we have $s^2+c =c^2+c=t$ and the roots of $4c^3 + 6c^2 + 2c + 1=\frac{df_c^3(0)}{dc}$ combined with $a = f_c(f_c(f_c(0)))$ to get the three $3\rrd$ critical values.

            \item  If $u=0$ and $s=0$, then $c= \pm t$ and so $t=c=0$ and so $a=0$.

            \item If $u=0$ and $t=\frac{1}{2}$, then $c= -\frac{1}{2}$ and so $a=-\frac{1}{4}$.

            \item If $s=0$ and $t=-\frac{1}{2}$, then $c= -\frac{1}{2}$ and so $a=-\frac{1}{4}$.
        \end{enumerate}
    \end{proof}
    We will treat $a=-\frac{1}{4}$ in Section \ref{sectminus1fourth}.
    \begin{thm}
        The genus of $C_{242}$ is
        \begin{equation*}
            g = \begin{cases}
              3 & a=0\\
              4 & a=2\\
              3 & a \in \{a_1,a_2,a_3\}.
            \end{cases}
        \end{equation*}
    \end{thm}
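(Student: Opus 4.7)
The plan parallels that of Theorem \ref{thm_g224}. Since $C_{242}$ is a complete intersection of three quadrics in $\PP^4$, the formula from the previous theorem gives arithmetic genus $p_a = 5$, so the geometric genus of each singular fiber equals $5 - \sum_P \delta_P$, the sum running over the singular points of the fiber.

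First I would enumerate the singular points at each bad $a$-value using the casework in the proof of the previous theorem. At $a = 0$ (so $c = 0$), Cases 1 and 3 both collapse to the single affine point $(q,u,s,t) = (0,0,0,0)$, since every iterate of $0$ under $f_0$ is $0$. At $a = 2$ (so $c = -2$), Case 1 gives the unique singular point $(0,0,-2,2)$; the remaining cases either have no solution or correspond to other $a$-values. At each third critical value $a_i$, Case 2 gives $q = 0$, $s = c$, $t = -1/(4c+2)$ with $c$ a root of $4c^3 + 6c^2 + 2c + 1$, together with the residual condition $u^2 = -t - c$; this quantity is nonzero at these $c$, producing two geometric singular points differing by the sign of $u$.

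Next I would compute the $\delta$-invariants using the three-step method of Theorem \ref{thm_g224}: translate the singular point to the origin, linearize the Zariski tangent space into two coordinate directions, project onto a plane curve in the remaining two coordinates, and blow up. For $a = 2$ and for each $a_i$, after translation I expect the leading-order equation surviving projection to factor into two distinct linear forms in the tangent-space variables, identifying the singularity as an ordinary node with $\delta_P = 1$. Since $a = 2$ contributes one such point and each $a_i$ contributes two, this gives $g = 5 - 1 = 4$ at $a = 2$ and $g = 5 - 2 = 3$ at each $a_i$.

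The main obstacle is the singularity at $a = 0$, where the three affine equations reduce to $s^2 = t + t^2$, $u^2 = t^2 - t$, and $q^2 = s + t^2$. Analytic parameterization by $q$ produces two branches $(q, \pm q^2, q^2, q^4) + O(q^5)$ meeting at the origin with the common tangent direction $(1,0,0,0)$, so this is a tacnode rather than a node. A single blow-up does not resolve the singularity --- the strict transforms of the two branches still meet at a point of multiplicity two, though now transversely --- so a second blow-up is required, and one obtains $\delta_P = \binom{2}{2} + \binom{2}{2} = 2$. This yields $g = 5 - 2 = 3$. The hardest step will be verifying rigorously, rather than just to leading order, that the two branches share their full first-order jet at the origin, so that the singularity is exactly a tacnode ($\delta = 2$) and not a more degenerate singularity with larger $\delta$.
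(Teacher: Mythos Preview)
Your proposal is correct and follows essentially the same approach as the paper: the paper likewise finds one singular point at $a=0$ requiring two blow-ups (multiplicity $2$ at both infinitely near points, so $\delta_P=2$), one node at $a=2$, and two nodes at each third critical value, yielding the same genera $3$, $4$, $3$. One small slip: in your branch parametrization at $a=0$ the equation $u^2 = t^2 - t \approx -q^4$ gives $u \approx \pm i q^2$, not $\pm q^2$; this does not affect the tacnode conclusion since the genus computation is geometric.
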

    \begin{proof}
        We proceed as in the proof of Theorem \ref{thm_g224} for analyzing the singularities.

        For $a=0$ there is one singularity that required two blow-ups to resolve and we get multiplicity $2$ for both of the infinitely near points and, hence, $\delta_P= \frac{1}{2}\left(2 \cdot 1\right) + \frac{1}{2}\left(2 \cdot 1\right)=2$ and $g=5-2 = 3$.

        For $a=2$ there is one singular point with $\delta_P = 1$ and, hence, $g=5-1 =4$.

        For $a\in \{a_1,a_2,a_3\}$ each curve has two singular points both with $\delta_P = 1$ and, hence, $g=5-(1+1) = 3$.
%
%
%
%

\end{proof}

\subsection{Examining the 2222 Surface}
    One possible 2222 arrangement of $8$ pre-images is
    \begin{equation*}
        \xymatrix{&& & & a  &\\
            && & t \ar[ur]^{f_c}&  & -t. \ar[ul]_{f_c} &\\
            && s \ar[ur]^{f_c} & & -s \ar[ul]_{f_c} \\
            &q \ar[ur]^{f_c} & & -q \ar[ul]_{f_c}& \\
            u \ar[ur]^{f_c} & & -u\ar[ul]_{f_c}& }
    \end{equation*}
    Every other 2222 arrangement differs only by renaming, so this is the only distinct 2222 arrangement.  The surface is defined by $4$ degree two equations in $\PP^5$ as
    \begin{equation*}
        C_{2222}=V(az^2-t^2 - (tz-s^2),az^2-t^2 - (sz-q^2),az^2-t^2 - (qz-u^2)) \subseteq \PP^5_{K(a)}.
    \end{equation*}
    From \cite[Theorem 1.3]{FHIJMTZ} the only singular fibers are for $a$ the $N\tth$ critical values for $2 \leq N \leq 4$.  For $N=2$ we get $a=-\frac{1}{4}$ which will be treated in Section \ref{sectminus1fourth}.  For $N=3$ we get the three $3\rrd$ critical values which we label $a_{3,1},a_{3,2},a_{3,3}$.  For $N=4$ we get the seven $4\tth$ critical values which we label $a_{4,i}$ for $1 \leq i \leq 7$  which satisfy
    \begin{equation*}
        a=f_c(f_c(f_c(f_c(0)))) \quad \text{for }\quad 8c^7 + 28c^6 + 36c^5 + 30c^4 + 20c^3 + 6c^2 + 2c + 1=0.
    \end{equation*}
    \begin{thm}
        The genus of $C_{2222}$ is
        \begin{equation*}
            g = \begin{cases}
              3 & a \in \{a_{3,1}, a_{3,2}, a_{3,3}\}\\
              4 & a \in \{a_{4,i} \col 1 \leq i \leq 7\}.
            \end{cases}
        \end{equation*}
    \end{thm}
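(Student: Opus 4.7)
The plan is to follow the same framework as in Theorems \ref{thm_224} and its analogue for $C_{242}$: use the Jacobian criterion to locate every singular point of the fiber, then compute the $\delta$-invariant at each via a translation-projection-blowup analysis, and finally subtract from the generic genus $5 = (4-3)2^{4-2}+1$. Since $g=3$ when $a$ is a $3\rrd$ critical value and $g=4$ when $a$ is a $4\tth$ critical value, we must show that $\sum_P \delta_P = 2$ in the first case and $\sum_P \delta_P = 1$ in the second case.

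First I would verify via the Jacobian criterion on the affine chart $\Aff^5_{z\neq 0}$ that at a $3\rrd$ critical value $a_{3,j}$ the fiber $C_{2222}$ has exactly two singular points, while at a $4\tth$ critical value $a_{4,i}$ it has exactly one. This is plausible from the structure of the defining equations: a singularity at level $N$ of the pre-image tree forces $c$ to be a root of $\frac{df_c^N(0)}{dc}$, so $3\rrd$-critical singularities arise at the two branches $s=\pm\sqrt{t-c}$ of the second preimage layer, while $4\tth$-critical singularities propagate only through the single new level-$4$ branch. The chart at infinity $z=0$ should again be handled by computing a maximal minor of the Jacobian (its determinant being a product of nonzero coordinates $s,q,u,t$, as in the $224$ and $242$ cases), showing the eight cuspidal points $(\pm 1,\pm 1,\pm 1,\pm 1,1,0)$ remain nonsingular.

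Next, for each singular point $P$ I would translate $P$ to the origin of $\Aff^4 = \Aff^5_{z\neq 0}$, compute the tangent cone (defined by the three linearized equations), and use the linear relations among the three tangent equations forced by the polynomial $\frac{df_c^N(0)}{dc}$ to reduce to a pencil of two independent linear forms $u,v$. Substituting out $u$ and $v$ yields a plane curve in the remaining two variables, and the only point of the chart projecting to $(0,0)$ will be $P$ itself. Then I would blow up the plane-curve singularity until it resolves, reading off the multiplicities $m_Q$ of the infinitely near points and applying $\delta_P = \sum_Q \tfrac{1}{2}m_Q(m_Q-1)$. I would expect (by analogy with the $224$ critical-value computation) that each $3\rrd$-critical singularity of $C_{2222}$ has multiplicity $2$ resolving in one blowup, so $\delta_P = 1$ and the two such points contribute $2$, giving $g = 5-2 = 3$. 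For the single $4\tth$-critical singularity, I expect a single node-type singularity with $\delta_P = 1$, giving $g=5-1=4$.

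The main obstacle will be the sheer size of the polynomial manipulations: $C_{2222}\subseteq \PP^5$ is cut out by four quadrics, the $4\tth$ critical values are roots of an irreducible degree-$7$ polynomial, and the projection-to-plane-curve elimination will produce dense high-degree equations. This will require working in the appropriate number field extension (generated by a root $\alpha$ of $8c^7+28c^6+\cdots+1$ and by any auxiliary square roots needed to express coordinates of the singular points) and then using Magma to carry out the blowup-and-multiplicity computation. The method is mechanical and parallels Theorem \ref{thm_g224} exactly; the only real verification is that the singularities remain of the simplest type (one blowup to resolve, multiplicity $2$), which would be confirmed computationally for each critical value in turn.
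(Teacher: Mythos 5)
Your proposal is correct in outline and its numerology is consistent with the paper's, but it takes a genuinely different computational route. You carry out the singularity analysis on the complete intersection model of $C_{2222}$ in projective space (the natural continuation of the method used for the $224$ and $242$ curves): there the generic arithmetic genus is $5$, the cuspidal points at infinity are nonsingular, and one only needs the affine singularities --- two nodes with $\delta_P=1$ at each $3\rrd$ critical value and one at each $4\tth$ critical value, giving $g=5-2=3$ and $g=5-1=4$. The paper instead switches models for this case and works with the degree $16$ plane curve $f_c^4(x)=a$ (isomorphic to the affine part of the complete intersection by \cite[Proposition 4.2]{FH}), whose arithmetic genus is $\binom{15}{2}=105$; the point $(0\col 1\col 0)$ at infinity is a singularity with $\delta_P=100$ present for every fiber, and the same two (resp.\ one) affine nodes then give $105-102=3$ (resp.\ $105-101=4$). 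The two computations agree because the affine singularities and their $\delta$-invariants are identical in both models and the $\delta=100$ at infinity exactly accounts for $105-5$. Your approach buys much smaller $\delta$-invariant computations (no repeated blow-ups of a $\delta=100$ singularity) at the cost of elimination among several quadrics over degree $7$ extensions of $\QQ$; the paper's buys a single explicit plane equation at the cost of resolving the heavy singularity at infinity. One small economy you could adopt from the paper: rather than running the Jacobian criterion from scratch to locate the singular fibers, \cite[Theorem 1.3]{FHIJMTZ} already identifies them as exactly the $N\tth$ critical values for $2\le N\le 4$, so only the count and type of singular points on each such fiber needs to be verified. Finally, note the ambient space is $\PP^4$ with three quadrics (coordinates $u,q,s,t,z$), not $\PP^5$ with four, though this does not affect the arithmetic genus $5$ you use.
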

    \begin{proof}
        A fiber of the 2222 surface is isomorphic \cite[Proposition 4.2]{FH} to the degree $16$ plain curve defined by the equation
        \begin{equation*}
            f_c^4(x)=a.
        \end{equation*}
        For $a \in \{a_{3,i}\}$ there are three singular points, one of which is $(0,1,0)$ and the other two depend on $a$.  The $(0,1,0)$ point requires several blow-ups and has $\delta_P=100$ and each of the other two points have $\delta_P = 1$ for a final genus of
        \begin{equation*}
            g = \frac{1}{2}\left(15\cdot 14\right) - 102 = 105-102 = 3.
        \end{equation*}

        For $a \in \{a_{4,i}\}$ there are two singular points, one of which is $(0,1,0)$ and the other depends on $a$.  The $(0,1,0)$ point has $\delta_P=100$ and the point has $\delta_P = 1$ for a final genus of
        \begin{equation*}
            g = \frac{1}{2}\left(15\cdot 14\right) - 101 = 105-101 = 4.
        \end{equation*}.

%
%
%
%
%
%
%
%
%
%
%


    \end{proof}

    \begin{cor}\label{cor_rational_4th}
        For any $a \in \bar{\QQ} \backslash \left\{-\frac{1}{4}\right\}$ and any algebraic number field $K$ there are only finitely many $c \in K$ for which there are at least two $K$-rational $4\tth$ pre-images of $a$.
    \end{cor}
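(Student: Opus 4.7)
The plan is to reduce the corollary to an application of Faltings' theorem on $C_{2222,a}$, whose geometry was analyzed in the preceding theorem. The main technical obstacle---computing the genus on the singular fibers---has already been resolved there, so the work remaining here is essentially only the reduction step.

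To carry out the reduction, I would observe that every $c \in K$ admitting a $K$-rational $4\tth$ pre-image $u$ of $a$ produces a $K$-rational point on $C_{2222,a}$. Indeed, setting $q = f_c(u)$, $s = f_c^2(u)$, and $t = f_c^3(u)$, each of $q, s, t$ lies in $K$, and a direct check shows the tuple $(u,q,s,t)$ satisfies the defining equations of $C_{2222,a}$ in the affine chart $\{z=1\}$. Since $c = a - t^2$ is recoverable from the coordinates, distinct $c$ produce distinct rational points on $C_{2222,a}$. (Note that $a \in K$ may be assumed, since otherwise no rational $1\sst$ pre-image exists and the set of exceptional $c$ is trivially empty.) The hypothesis that there are at least two rational $4\tth$ pre-images guarantees the existence of such a $u$.

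Finally, combining Theorem \ref{thm_genus_formula} with the preceding theorem and \cite[Theorem~1.3]{FHIJMTZ}, the curve $C_{2222,a}$ has geometric genus at least $3$ for every $a \in \bar\QQ \setminus \{-\tfrac{1}{4}\}$: nonsingular fibers have genus $5$, while the singular fibers---occurring only at the third and fourth critical values of $f_c$---have genus $3$ and $4$ respectively. Faltings' theorem then gives $|C_{2222,a}(K)| < \infty$, and projecting to the $c$-coordinate yields the finiteness asserted in the corollary.
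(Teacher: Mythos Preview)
Your proposal is correct and matches the paper's intended argument: the corollary is stated in the paper without an explicit proof, as an immediate consequence of the preceding genus computation for $C_{2222,a}$ together with Faltings' theorem. Your write-up spells out exactly this reduction, including the useful observation that $a \notin K$ makes the statement vacuous and that $c$ is recoverable from the $t$-coordinate, so the finiteness of $C_{2222,a}(K)$ transfers to finiteness of the relevant $c$-values.
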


\subsection{The Bound $\bar{\kappa}(-1/4)$} \label{sectminus1fourth}
    For $a=-\frac{1}{4}$ the pre-images curves are in fact reducible since we have an equation in the generators of the form
    \begin{equation*}
        s^2 + (t-1/2z)^2 = (s-(t-1/2z))(s+(t-1/2z)),
    \end{equation*}
    where $s$ is a $2\nnd$ pre-image of $a$ for which $s^2+c= t$ and $t^2+c = a$ and an equation of the form
    \begin{equation*}
        u^2 -(t+1/2z)^2 = (u-(t+1/2z)(u-(t+1/2z))
    \end{equation*}
    where $u$ is a $2\nnd$ pre-image of $a$ for which $u^2+c = -t$.  After splitting the pre-image curves into their distinct irreducible components we can again proceed with genus calculations.
    \begin{thm}\label{thm_minus14}
        For any fixed number field $K$, $\bar{\kappa}\left(-\frac{1}{4}\right) = 10$.
    \end{thm}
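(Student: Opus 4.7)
The plan is to prove $\bar{\kappa}(-1/4,K) \geq 10$ and $\bar{\kappa}(-1/4,K) \leq 10$ separately. For the lower bound, I would use the factorizations noted immediately before the theorem: at $a = -\frac{1}{4}$, writing $c = -t^2 - \frac{1}{4}$ for any first pre-image $t$ forces the second pre-image equations to factor as $s^2 - (t+\frac{1}{2})^2 = 0$ and $u^2 - (t-\frac{1}{2})^2 = 0$, so every $t \in K$ automatically yields two first and four rational second pre-images, giving $\bar{\kappa}(-\frac{1}{4}, K) \geq 6$ immediately. To upgrade to ten I would specialize to the $2{+}4{+}2{+}2$ arrangement and impose $w^2 = t^2 + t + \frac{3}{4}$ (two rational third pre-images of $s_1 = t+\frac{1}{2}$) together with $v^2 = w + t^2 + \frac{1}{4}$ (two rational fourth pre-images of $w$). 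This system is a complete intersection of two quadrics in $\PP^3$, hence by the genus formula of Section~\ref{sect_upper} an arithmetic genus one curve. I would then exhibit rational points such as $(t,w,v) = (-\frac{3}{4}, \frac{3}{4}, \frac{5}{4})$ and $(\frac{1}{12}, \frac{11}{12}, \frac{13}{12})$, compute a minimal Weierstrass model, and verify via \textit{Magma} that the resulting elliptic curve over $\QQ$ has positive rank. Since these rational points have distinct $t$-coordinates, positive rank forces infinitely many $t \in \QQ \subseteq K$ to arise, producing infinitely many $c \in K$ with at least ten rational pre-images of $-\frac{1}{4}$.

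For the upper bound I would enumerate every distinct arrangement of twelve rational pre-images up to the renaming symmetries of the pre-image tree, namely $2{+}4{+}6$, $2{+}4{+}4{+}2$, $2{+}4{+}2{+}4$, and $2{+}4{+}2{+}2{+}2$. For each arrangement the defining system is a complete intersection of quadrics in some $\PP^m_K$. I would apply the factorizations of second pre-image equations to decompose the fiber at $a = -\frac{1}{4}$ into irreducible components, and compute the geometric genus of each using the method of Theorem~\ref{thm_g224}: locate singularities via the Jacobian criterion, project each component to a plane curve whose tangent space at the singular point matches that of the component, and resolve by successive blow-ups while summing the $\delta$-invariants. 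Once every component is shown to have geometric genus at least two, Falting's theorem guarantees that only finitely many $c \in K$ realize any such twelve-pre-image arrangement, yielding $\bar{\kappa}(-\frac{1}{4}, K) \leq 10$.

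The main obstacle will be the upper bound. Both the number of arrangements and the number of irreducible components per arrangement are larger than in Section~\ref{sect_upper}, and the complete intersections now live in $\PP^5$ or $\PP^6$, so the singularity analyses are substantially more delicate; in practice they are best handled by computer algebra following the template of Theorems~\ref{thm_224} and~\ref{thm_g224}. The one genuine subtlety is checking that no component unexpectedly degenerates to a curve of genus zero or one, which I would verify for each arrangement by confirming that the total $\delta$-invariant on every irreducible component falls short of its arithmetic genus by at least two, ruling out any contribution of twelve or more rational pre-images from an infinite family of specializations.
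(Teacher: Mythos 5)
Your proposal follows essentially the same strategy as the paper: split the fibers at $a=-\frac{1}{4}$ into irreducible components via the factorization of the second pre-image equations, exhibit a genus-one ten-pre-image component with positive rank for the lower bound, and show that every twelve-pre-image arrangement ($246$, $2442$, $2424$, $24222$) has genus at least two so that Falting's theorem gives the upper bound. The only differences are cosmetic: you work with the $2422$ component where the paper verifies rank one for the $244$ component (your positive-rank claim is likewise deferred to \textit{Magma}, so be prepared to fall back on $244$ if $2422$ turns out to have rank zero), and the paper finds all relevant components nonsingular, so the complete-intersection genus formula applies directly and none of the $\delta$-invariant analysis you anticipate is actually needed.
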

    \begin{proof}
        Using the Jacobian criterion we compute that the following curves are all nonsingular, and we apply the genus formula from \cite[\S 22]{Hirzebruch} or \cite[Corollary 2]{Arslan} to compute the following genera.
        \begin{equation*}
            g = \begin{cases}
              1 & \{224,2222,244,2422\}\\
              5 & \{22222,2224,2242,246,2442,2424,24222\}.
            \end{cases}
        \end{equation*}
        Using Magma, we see that the $244$ curve is a rank $1$ elliptic curve over $\QQ$ isomorphic to
        \begin{equation*}
            v^2w = u^3 + u^2w - 9uw^2 + 7w^3
        \end{equation*}
        so has infinitely many rational points.  Therefore, there are infinitely many $c$ with $10$ rational pre-images of $-\frac{1}{4}$ and only finitely many $c$ values with $12$ (or more) rational pre-images of $-\frac{1}{4}$.
    \end{proof}





%
%
%
%
%
%
%
%
%
%
%
%
%
%
%
%
%
%
%
%
%
%
%
%
%
%
%


\section{Proof of Theorem \ref{thm_kappabar}} \label{sect_proof}

    We are now ready to prove Theorem \ref{thm_kappabar}.
    \begin{proof}
        The case $a = -\frac{1}{4}$ was covered in Theorem \ref{thm_minus14}.

        For $a$ a third critical value we have genus $1$ for the 224 curve and, hence, for a large enough extension of $\QQ$ it has positive rank and infinitely many rational points.  Also, it has no $\QQ$-rational points.  The 242 curve has genus greater than $1$ and, hence, has only finitely many rational points.  Thus, for $\bar{\kappa}(a,K)$ to be at least $10$ there must be infinitely many rational points on a curve corresponding to an arrangement with rational $4\tth$ pre-images, which is not possible by Corollary \ref{cor_rational_4th}.  So it is possible for $\bar{\kappa}(a,K)$ to be either $6$ or $8$ depending on the field.

        For all other values of $a$ we have the genus of the 224 and 242 curves are greater than $1$ and, hence, have only finitely many rational points.  Any arrangement with more points must contain one of these two arrangements, hence $\bar{\kappa}(a,K) \leq 6$.  Theorem \ref{thm222_rank} shows that the $222$ surface has generic rank $2$ and \cite{Zannier} shows that the set of $a$ where the rank is $0$ is finite.  Every $a$ value for which both $E_{222}$ and $E_{24}$ specialize to rank $0$ has $\bar{\kappa}(a) = 4$, otherwise $\bar{\kappa}(a) = 6$.
    \end{proof}

\section{Other Properties of Pre-Image Surfaces}\label{sect_other}
    In this section we collect some additional properties of the pre-images surfaces that are tangential to the proof of Theorem \ref{thm_kappabar}, yet still of interest.

\subsection{Parametrization of Torsion Subgroups of $E_{24}$}\label{sect_torsion}
        Recall that Mazur's Theorem \cite{Mazur} gives a description of the possible torsion subgroups of elliptic curves over $\QQ$ and that the specialization map is injective on nonsingular fibers.  These facts combined with Theorem \ref{thm24_rank} implies that the possible torsion subgroups for a nonsingular specialization of $E_{24}(a)$ must be isomorphic to one of the following groups:
        \begin{equation*}
            \{\ZZ/2\ZZ\times\ZZ/4\ZZ,\;\ZZ/2\ZZ\times\ZZ/8\ZZ,\;\ZZ/4\ZZ,\;\ZZ/8\ZZ,\;\ZZ/12\ZZ\}.
        \end{equation*}
        We characterize the $a$ values giving rise to a specialization with each of these possible torsion subgroups in the following theorem.
        \begin{thm} \label{thm24_torsion}
            \mbox{}
            \begin{enumerate}
                \item $E_{24}(a)(\QQ)$ contains a subgroup isomorphic to $\ZZ/2\ZZ\times\ZZ/4\ZZ$ if and only if
                    \begin{equation*}
                        a=-t^2 \quad \text{for} \quad t\in\QQ \backslash \left\{0,\pm\frac{1}{2}\right\}.
                    \end{equation*}

                \item $E_{24}(a)(\QQ)$ contains a subgroup isomorphic to $\ZZ/8\ZZ$ if and only if
                    \begin{equation*}
                        a=\frac{t^2(t^2 - 2)}{4} \quad \text{for} \quad t\in\QQ\backslash \{0,\pm1\}.
                    \end{equation*}

                \item $E_{24}(a)(\QQ)$ contains a subgroup isomorphic to $\ZZ/2\ZZ \times \ZZ/8\ZZ$ if and only if
                    \begin{equation*}
                        a = -\frac{(4t^2 - 4t - 1)^2(4t^2 + 4t - 1)^2}{4(4t^2 + 1)^4} \quad \text{for} \quad t \in \QQ\backslash \left\{0,\pm \frac{1}{2}\right\}.
                    \end{equation*}

                \item $E_{24}(a)(\QQ)$ contains a subgroup isomorphic to $\ZZ/12\ZZ$ if and only if
                \begin{align*}
                    a &= \frac{(13691470144t^2 - 235376t + 1)(13903463744t^2 - 235376t + 1)^3}{9527265101250297856000000t^6(117688t-1)^2} \quad \text{for} \quad t \in \QQ \backslash \left\{0,\frac{1}{117688}\right\}.
                \end{align*}

%
            \end{enumerate}
        \end{thm}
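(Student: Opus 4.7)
My plan is to prove each of (a)--(d) by converting the existence of the prescribed torsion subgroup into an explicit algebraic condition on $a$ (via division or doubling formulas), and then producing a rational parameterization of the resulting locus. The key a priori observation is that each of the relevant modular curves ($X_1(4),X_1(8),X_1(2,8),X_1(12)$) has genus zero with rational points, so rational parameterizations are guaranteed to exist; the substantive work is to compute them explicitly and match the stated forms.

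For (a), the point $T(a)=(2,8a+2)$ from Theorem \ref{thm24_rank} has order $4$, so a direct doubling computation yields $2T(a)=(1-4a,0)$. Hence the $2$-division polynomial factors as
\begin{equation*}
u^3+(4a-1)u^2+16au+(64a^2-16a)=(u-(1-4a))(u^2+16a),
\end{equation*}
and full rational $2$-torsion is equivalent to $u^2+16a$ splitting over $\QQ$, i.e., to $-a$ being a rational square. This yields $a=-t^2$; the excluded $t\in\{0,\pm 1/2\}$ correspond exactly to the singular fibers $a\in\{0,-1/4\}$.

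For (b), a $\ZZ/8\ZZ$ subgroup of $E_{24}(a)(\QQ)$ is equivalent to a rational $P$ with $2P=\pm T(a)$, hence with $x(2P)=2$. Substituting the doubling formula and letting $x=2z+2$, $m=1+4a$, the condition simplifies to
\begin{equation*}
z^4-4mz^2-4m^2z-m^3=0,\qquad\text{equivalently,}\qquad z^4=m(2z+m)^2.
\end{equation*}
A rational solution therefore forces $m=r^2$ for a rational $r$, and then the quadratics $z^2\mp r(2z+r^2)=0$ have rational roots iff $1\pm r$ is itself a rational square. Writing $1+r=t^2$ yields $m=(t^2-1)^2$, i.e., $a=t^2(t^2-2)/4$; one verifies directly that the corresponding $y$-coordinate satisfies $y_P^2=4t^2(t-1)^2(t+1)^6$, a rational square, so $P$ is indeed rational.

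For (c), imposing (a) and (b) simultaneously gives $-s^2=t^2(t^2-2)/4$, which rearranges to the conic $(t^2-1)^2+(2s)^2=1$; this has the rational point $(s,t)=(0,\pm 1)$ and so admits a rational parameterization via stereographic projection. Back-substituting into $a=-s^2$ and using the identities $(4t^2+1)^2\pm 8t(4t^2-1)=(4t^2\pm 4t-1)^2$ then recovers the stated formula. For (d), a $\ZZ/12\ZZ$ subgroup exists iff there is a rational $3$-torsion point (since $\gcd(3,4)=1$ and the $4$-part is supplied by $T(a)$); this is detected by a rational root of the $3$-division polynomial $\psi_3(x)$ of $E_{24}(a)$, a quartic in $x$ with coefficients in $\ZZ[a]$. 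The plane curve $\psi_3(x,a)=0$ is birational to $X_1(12)\cong\PP^1$, so it admits an explicit rational parameterization which can be computed in \emph{Magma} and matched to the stated formula. The main obstacle is the size of the expressions in (d), which is why the final answer involves a degree-six numerator; I would verify the formula by substituting back to produce a rational $3$-torsion point on the generic specialization of $E_{24}(a)$.
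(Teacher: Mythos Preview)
Your approach matches the paper's in all four parts: (a) is identical; (c) uses the same strategy of intersecting the parameterizations from (a) and (b) and parameterizing the resulting genus-zero conic; (b) reaches the same condition $x(2P)=2$, though your substitution $x=2z+2$, $m=4a+1$ leading to $z^4=m(2z+m)^2$ is a cleaner route to the answer than the paper's direct appeal to the quartic formula; and for (d) you detect a rational $3$-torsion point via $\psi_3$, whereas the paper imposes $x([3]Q)=2$ and divides out the factor $x-2$, but since $\ZZ/12\ZZ\cong\ZZ/3\ZZ\times\ZZ/4\ZZ$ and $T(a)$ already supplies the $4$-part, these conditions are equivalent and both reduce to a computer-algebra parameterization of a genus-zero plane curve.

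One small imprecision in your (d): the curve $\psi_3(x,a)=0$ records only the $x$-coordinate of a $3$-torsion point, so it is birational to the quotient of $X_1(12)$ by the involution negating the $3$-part rather than to $X_1(12)$ itself. Your proposed back-substitution to exhibit an actual rational $3$-torsion point on the generic fiber is exactly what closes this gap, and the paper's approach implicitly requires the analogous $y$-coordinate check after parameterizing $x([3]Q)=2$.
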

        \begin{proof}
            \mbox{}
            \begin{enumerate}
                \item First suppose $a=-t^2$ for some $t\in\QQ \backslash \left\{0,\pm \frac{1}{2}\right\}$. Then $$\{ \OO, (4t^2+1,0,1), (4t,0,1),(-4t,0,1)\}$$
                    is a subgroup of $E_{24}(-t^2)(\QQ)$ isomorphic to $\ZZ/2\ZZ\times\ZZ/2\ZZ$.  Since there is also a generic torsion point of order $4$ (Theorem \ref{thm24_rank}), $E_{24}(-t^2)(\QQ)$ contains a subgroup isomorphic to $\ZZ/2\ZZ\times\ZZ/4\ZZ$.  Next, suppose $E_{24}(a)(\QQ)$ contains a subgroup isomorphic to $\ZZ/2\ZZ\times\ZZ/2\ZZ$ and, hence, also a subgroup isomorphic to $\ZZ/2\ZZ\times\ZZ/4\ZZ$. Thus, $E_{24}(a)(\QQ)$ has three points of order two. Points of order two must be rational roots of the Weierstrass equation
                    \begin{equation} \label{eq_24}
                        x^3 +(4a-1)x^2+(16a)x+16a(4a-1) = (x+4a-1)(x^2+16a).
                    \end{equation}
                    So, $x^2+16a$ must have 2 rational roots, or equivalently, $a=-(x/4)^2=-t^2$. Hence, there are three rational roots of (\ref{eq_24}) if and only if $a=-t^2$ for $t\in\QQ$. However, if $t=\pm1/2$ then the roots will not be distinct, so we must have $a=-t^2$ for $t\in\QQ\backslash \{\pm \frac{1}{2}\}$.  For $t=0$ we get $a=0$ which is a degenerate case (a singular fiber of $\cpc{2}$).

                \item Suppose $a=t^2(t^2 - 2)/4$ for some $t\in\QQ \backslash\{0,\pm1\}$. Then it can be verified directly that the point $P=(2t(t^2+t-1),2(t-1)t(t+1)^3,1)$ is in $E_{24}(a)(\QQ)$ and $[2]P = (2,2(4a+1),1)$ is the generator of the cyclic subgroup of order four. So, $P$ generates a cyclic group of order eight.

                    Now suppose that $E_{24}(a)(\QQ)$ has a cyclic subgroup of order eight. If we let $P=(x,y,1)$ be the generator of the subgroup, then $[2]P$ generates a cyclic group of order four (the generic torsion subgroup). So, we must have $x([2]P)=2$. This gives us the equation
                    \begin{equation*}
                        x^4-8x^3-64ax^2+8x^2-512a^2x-1024a^3+256a^2+64a=0.
                    \end{equation*}
                    Then using the solution to the quartic we have the solutions
                    \begin{align*}
                        x&=2\pm2\sqrt{4a+1}+\frac{1}{2}\sqrt{24+(8a-1)\pm\frac{512+4096a^2+256(8a-1)}{16\sqrt{4a+1}}}\\
                        x&=2\pm2\sqrt{4a+1}-\frac{1}{2}\sqrt{24+(8a-1)\pm\frac{512+4096a^2+256(8a-1)}{16\sqrt{4a+1}}}.
                    \end{align*}
                    In order to have $x\in\QQ$, and since $x$ is clearly not $2$, we must have $\sqrt{4a+1}\in\QQ$. So $a=\frac{b^2-1}{4}$ for some $b\in \QQ$. The above roots become
                    \begin{align*}
                        x&=2(1\pm b + b\sqrt{1\pm b})\\
                        x&=2(1\pm b - b\sqrt{1\pm b})
                    \end{align*}
                    from which it follows that $b=\pm (t^2 - 1)$. Thus, $a=\frac{t^2(t^2-2)}{4}$. Note that for $t=\pm 1$ we get $a=-\frac{1}{4}$ and for $t=0$ we get $a=0$ which are all singular fibers.

                \item Clearly, $E_{24}(a)(\QQ)$ has a subgroup isomorphic to $\ZZ/2\ZZ \times \ZZ/8\ZZ$ if and only if $E_2(a)$ has a subgroup isomorphic to $\ZZ/2\ZZ \times \ZZ/4\ZZ$ and a subgroup isomorphic to $\ZZ/8\ZZ$. From the two previous parts, it follows that $a=-t_1^2$ and $a=\frac{t_2^2(t_2^2-2)}{4}$. These two equations define a curve of genus zero which can be parameterized with Magma and substituted into $a=-t_1^2$ to get the stated form.  For $t=0, \pm \frac{1}{2}$ we get $a=-\frac{1}{4}$, which is a singular fiber.

                \item Since specialization is injective on torsion for nonsingular fibers , $E_{24}(a)(\QQ)$ has a subgroup isomorphic to $\ZZ/12\ZZ$ if and only if there is a point $Q=[x,y] \in E_{24}(a)(\QQ)$ for which $[3]Q$ generates the generic $\ZZ/4\ZZ$ torsion subgroup.  In particular, we must have $x([3]Q) = 2$.  So we need to find solutions to
                    \begin{equation*}
                        \frac{x([3]Q)-2}{x-2} = 0
                    \end{equation*}
                    where we divide out by $x-2$ since we only wish to exclude the $a$ values which have purely $\ZZ/4\ZZ$ torsion.  From the \textit{algcurve} package in Maple we get the parametrization given.  The two excluded $t$ values correspond to the two singular fibers $a=0$ and $a=-\frac{1}{4}$.
            \end{enumerate}
        \end{proof}

        \begin{cor}
            The $a \in \QQ$ for which $E_{24}(a)(\QQ)$ has torsion subgroup exactly $\ZZ/4\ZZ$, in other words, the $a \in \QQ$ for which the specialization map is an isomorphism on torsion, is a Zariski dense set.
        \end{cor}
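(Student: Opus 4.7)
The plan is to combine Theorems \ref{thm24_rank} and \ref{thm24_torsion} with a height count. By Theorem \ref{thm24_rank} the generic torsion of $E_{24}$ over $\QQ(a)$ is $\ZZ/4\ZZ$ and specialization is injective on torsion at nonsingular fibers, so the corollary is equivalent to showing the exceptional locus
\begin{equation*}
    \Sigma = \left\{a \in \QQ : E_{24}(a)(\QQ)_{\tors} \supsetneq \ZZ/4\ZZ\right\} \cup \left\{0,-\tfrac{1}{4}\right\}
\end{equation*}
has infinite complement in $\QQ$. Mazur's theorem combined with Theorem \ref{thm24_torsion} identifies $\Sigma$ as the union of the two singular fibers with the images of the four rational parametrizations $r_i(t)$ given there, of respective degrees $d_1 = 2$, $d_2 = 4$, $d_3 = 8$, $d_4 = 8$. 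Since the proper Zariski-closed subsets of $\Aff^1_\QQ$ are finite, a subset of $\QQ \subset \Aff^1_\QQ(\QQ)$ is Zariski dense if and only if it is infinite, so it suffices to prove $\QQ \setminus \Sigma$ is infinite.

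I would then bound $\#\left(\Sigma \cap \{H \leq B\}\right)$ using the naive multiplicative height $H$ on $\QQ$, which satisfies the standard asymptotic $\#\{a \in \QQ : H(a) \leq B\} \sim \tfrac{12}{\pi^2}B^2$. For a nonconstant rational function $r$ of degree $d$ the height comparison $H(r(t)) \asymp H(t)^d$ holds for all but finitely many $t \in \QQ$ (functoriality of heights under a degree-$d$ endomorphism of $\PP^1$), so the image $r_i(\QQ)$ contains only $O(B^{2/d_i})$ points of height at most $B$. Summing over $i = 1,2,3,4$ gives $\#\left(\Sigma \cap \{H \leq B\}\right) = O(B)$, dominated by the contribution of $r_1(t) = -t^2$. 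Comparing with the $\sim \tfrac{12}{\pi^2}B^2$ total rationals of height $\leq B$ shows $\QQ \setminus \Sigma$ is infinite (in fact of natural density one), completing the proof.

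The main technical point is the height comparison $H(r_i(t)) \asymp H(t)^{d_i}$ for the elaborate parametrizations in cases (3) and (4) of Theorem \ref{thm24_torsion}, which reduces to checking that the numerator and denominator of each $r_i$ share no common factor in $\QQ[t]$ so that no accidental cancellation deflates the degree; this is a finite verification readily done in \textit{Magma}. If one prefers to sidestep heights altogether, an explicit approach works equally well: any positive rational $a > 0$ automatically avoids the images of $r_1$ and $r_3$ (both take non-positive values on $\QQ$), and direct Diophantine analysis of the equations $a = r_2(t)$ and $a = r_4(t)$ shows that only a sparse subset of $\QQ_{>0}$ lies in those images, leaving an infinite set of positive rationals in $\QQ \setminus \Sigma$.
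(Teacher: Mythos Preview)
Your argument is correct and in fact more careful than the paper's. Both proofs start from the same ingredients (Mazur, Theorem~\ref{thm24_rank}, and the parametrizations of Theorem~\ref{thm24_torsion}) and both recognize that ``Zariski dense in $\Aff^1$'' just means ``infinite''. The paper then asserts that having torsion strictly larger than $\ZZ/4\ZZ$ is a \emph{closed} condition on $a$, exhibits the single witness $a=1$, and concludes. Read literally this step is problematic: the exceptional locus $\Sigma$ is the union of images $r_i(\QQ)$ of degree $\geq 2$ rational maps, which is an infinite set and hence certainly not a proper Zariski-closed subset of $\Aff^1_\QQ$. What the paper presumably has in mind is that $\Sigma$ is a \emph{thin} set in the sense of Serre, so Hilbert irreducibility guarantees its complement is infinite; but this is not what is written. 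Your height-counting argument sidesteps the issue entirely by proving the needed special case of the thin-set density estimate directly: $\#(\Sigma\cap\{H\le B\})=O(B)$ against $\asymp B^2$ rationals of height $\le B$. This buys you a rigorous proof without citing Hilbert irreducibility, plus the bonus quantitative statement that the exceptional set has density zero. The only loose end you flag yourself---coprimality of numerator and denominator of each $r_i$ so that $H(r_i(t))\asymp H(t)^{d_i}$---is indeed a finite check; note also that for the mere infinitude conclusion you only need the upper bound $H(t)\ll H(r_i(t))^{1/d_i}+O(1)$, which follows from Northcott and the fact that each fiber of $r_i$ is finite, so even that check is not strictly necessary.
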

        \begin{proof}
            From Mazur's theorem and the injectivity of the specialization map, the possible torsion groups of $E_{24}(a)(\QQ)$ are
            \[
            \{\ZZ/2\ZZ\times\ZZ/4\ZZ,\;\ZZ/2\ZZ\times\ZZ/8\ZZ,\;\ZZ/4\ZZ,\;\ZZ/8\ZZ,\;\ZZ/12\ZZ\}.
            \]
            The condition on $a$ for $E_{24}(a)(\QQ)_{\tors}$ to not be $\ZZ/4\ZZ$ is a closed condition from Theorem \ref{thm24_torsion} and the $j$-invariant.  Therefore, every $a \in \QQ$ outside of this Zariski closed set satisfies $E_{24}(a)(\QQ)_{\tors} \cong \ZZ/4\ZZ$ and there is at least one such $a$, $E_{23}(1)(\QQ)_{\tors} \cong \ZZ/4\ZZ$.
        \end{proof}

\subsection{Exceptional $(c,a)$ Values Over $\QQ$} \label{sect_exceptional}
    \subsubsection{Rank Zero}
        The methods of \cite{Masser, Zannier}, in principle, can compute the full set $S$, but in practice such computations are difficult.  However, computing the set $S \cap K$ for $[K\colon \QQ] \leq 2$ from Theorem \ref{thm_kappabar} is feasible since we have an explicit (small) bound on the order of a torsion point.

        We must have both $P(a)$ and $Q(a)$ are torsion on the 222 surface.  We have a bound of $18$ for the order of a torsion point over a quadratic number field $K$ \cite{Kamienny,Kenku}.  Finding the $a$ for which $P(a)$ or $Q(a)$ is torsion of a given order is solving polynomials equation in $a$.  If there are any $a$ values for which they are both torsion, we compute the rank of $E_{24}(a)$.
        \begin{thm}
            Let $S$ be the set of $a$ values from Theorem \ref{thm_kappabar} for which $\bar{\kappa}(a) = 4$.  Let $K$ be a quadratic number field.  Then, $S \cap K= \emptyset$.
        \end{thm}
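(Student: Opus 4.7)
The plan is to exploit the explicit characterization of $S$ obtained in the proof of Theorem \ref{thm_kappabar}: a value $a \in K$ lies in $S$ precisely when both $E_{222}(a)(K)$ and $E_{24}(a)(K)$ have rank zero. The two sections $P(a), Q(a)$ of Theorem \ref{thm222_rank} generate $E_{222}(\QQ(a))$ and specialize to $\QQ$-rational, hence $K$-rational, points of $E_{222}(a)$ for every $a \in K$. Consequently, a necessary condition for $a \in S \cap K$ is that both $P(a)$ and $Q(a)$ be torsion in $E_{222}(a)(K)$.

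By the Kamienny--Kenku theorem, the torsion subgroup of an elliptic curve over a quadratic number field has order dividing an integer in a specific finite list, the maximum of which is $18$. The first step is therefore to compute, for each admissible $n \leq 18$, the polynomial $\Phi_n(a) \in \QQ[a]$ obtained by evaluating the $n$-th division polynomial of $E_{222}(a)$ along the section $P(a)$; its zero locus is exactly the set of $a$ for which $P(a)$ has order dividing $n$. The analogous construction applied to $Q(a)$ gives polynomials $\Psi_m(a)$.

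The next step is to enumerate, for each pair $(n,m)$, the common roots of $\Phi_n$ and $\Psi_m$ over $\overline{\QQ}$ and retain only those generating at most a quadratic extension of $\QQ$. This produces a finite, explicit list of candidate $a$ values, any one of which is the only possible location of a point of $S \cap K$. For each surviving candidate, I would explicitly compute the Mordell-Weil rank of $E_{24}(a)$ over the corresponding quadratic field $K$, using two-descent in \textit{Magma}. The theorem follows if every candidate either fails the simultaneous-torsion condition or satisfies $\rank E_{24}(a)(K) > 0$; in fact one expects the simultaneous-torsion locus itself to be empty over quadratic fields, in which case the rank step is unnecessary.

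The principal obstacle is computational rather than conceptual: the $n$-th division polynomial has degree quadratic in $n$, and pulling it back along $P(a)$ and $Q(a)$ produces polynomials in $a$ of rapidly increasing degree, so the enumeration of zero loci and the extraction of factors defining quadratic points demands significant symbolic algebra for $n$ as large as $18$. A secondary difficulty is carrying out reliable rank computations for $E_{24}(a)$ over each resulting real or imaginary quadratic field, which in bad cases may force one to resort to analytic rank bounds or hand-crafted descent arguments rather than the automated routines.
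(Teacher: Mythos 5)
Your proposal follows essentially the same route the paper outlines: use the Kamienny--Kenku bound of $18$ on torsion order over quadratic fields, determine the $a$ for which both sections $P(a)$ and $Q(a)$ of $E_{222}$ become torsion by solving the resulting polynomial equations in $a$ (equivalently, via division polynomials), and for any surviving candidates check the rank of $E_{24}(a)$. The paper's proof is simply recorded as a direct computation along exactly these lines, so your plan is a faithful and correctly reasoned elaboration of it.
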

        \begin{proof}
            Direction computation.
        \end{proof}

%
%
%

    \subsubsection{Full Trees of Pre-Images}
        We can find an $a$ value with arbitrarily many $\QQ$-rational pre-images by taking $a$ to be the $n\tth$ forward image of any wandering $\QQ$-rational point.  This gives a very deep but potentially sparse pre-image tree.  Consequently, one may ask if you can find an $a$ and $c$ which gives a full tree to some level.  Clearly, if you allow $K/\QQ$ to be of large degree, the answer is any level, so we address this question over $\QQ$.  For example, the following lists $(c,a)$ with a $246$ pre-image arrangement.
        \begin{enumerate}
            \item $(-5248/2025, 726745984/284765625)$
            \item $(-17536/5625, 878382976/244140625)$
            \item $(-9153/6400, -437896611/400000000)$
            \item $(-24361/14400, -42/25)$
            \item $(-20817/25600, -1078371711/6400000000)$
            \item $(-180625/97344, 2845625/5483712)$
            \item $(-158848/99225, 20844352384/683722265625)$
        \end{enumerate}
        \begin{rem}
            We were unable to find any pairs $(c,a)$ over $\QQ$ with the full 248 arrangement, but it seems reasonable to expect that such an arrangement exists. We searched by choosing the smallest $3\rrd$ pre-image having height at most $\log(30,000)$, since choosing two third pre-images which map to same second pre-image (up to sign) fixes a unique $c$ value and, hence, a unique $a$ value.
        \end{rem}

%

\end{document}